\documentclass[a4paper,reqno]{amsart}
\usepackage[utf8]{inputenc}
\usepackage{amssymb}
\usepackage{amsmath}
\usepackage{mathtools}
\usepackage{ifpdf}
\usepackage{vmargin}

\usepackage{graphicx}  
\usepackage{subfigure}
\usepackage{epstopdf}
\usepackage{caption}

\usepackage{setspace}
\usepackage{xcolor}

%\pagestyle{empty}

%\usepackage{amssymb}
%\usepackage{amssymb, latexsym}
%\usepackage{hyperref}
%\usepackage{amsmath}
%\usepackage{amscd}
%\usepackage{enumerate}
%\usepackage{amsfonts}
%\usepackage{graphicx}
%\usepackage[all]{xypic}
%\usepackage{mathrsfs}
%%Space setup

%\setlength{\marginparwidth}{.8in} \setlength{\textheight}{22cm}
%\setlength{\oddsidemargin}{0.09in}
%\setlength{\evensidemargin}{0.10in}
%\setlength{\textwidth}{15.6cm}
%\setlength{\topmargin}{0in} \setlength{\headheight}{0.18in}
%\setlength{\marginparwidth}{1.0in}

\ifpdf 
\pdfcompresslevel=9
\usepackage[pdftex]{hyperref}
\else
\usepackage[hypertex]{hyperref}
\fi

\hypersetup{
%unicode=true,
%citecolor=green,
%menucolor=red,
%citebordercolor=0 1 0,
%linkbordercolor=1 0 0,
%pdfpagemode=UseOutlines,
pdftitle={ },
pdfauthor={},
%pdfsubject={},
%pdfkeywords={} 
}

\newtheorem{theorem}{Theorem}
\newtheorem{proposition}[theorem]{Proposition}
\newtheorem{lemma}[theorem]{Lemma}
\newtheorem{remark}[theorem]{Remark}

\newtheorem{definition}[theorem]{Definition}

\numberwithin{equation}{section}
%\numberwithin{theorem}{section}

%\theoremstyle{definition}
%\newtheorem{definition}{Definition}

%\DeclareMathOperator{\csch}{csch}
%\DeclareMathOperator{\arcsec}{arcsec}
%\DeclareMathOperator{\arccot}{arcCot}
%\DeclareMathOperator{\arccsc}{arcCsc}
%\DeclareMathOperator{\arccosh}{arcCosh}
%\DeclareMathOperator{\arcsinh}{arcsinh}

%\DeclareMathOperator{\arcsech}{arcsech}
%\DeclareMathOperator{\arccsch}{arcCsch}
%\DeclareMathOperator{\arccoth}{arcCoth}

%\theoremstyle{remark}

%%%%%%%%%
% norms %
%%%%%%%%%

%%%%%%%%%%%%%%%%%%
% scalar product %
%%%%%%%%%%%%%%%%%%

%%%%%%%%%%%%%%%%%%%
% duality product %
%%%%%%%%%%%%%%%%%%%

%%%%%%%%%%%%%%%%%
% Greek letters %
%%%%%%%%%%%%%%%%%

%%%%%%%%%%%%%%%
% number sets %
%%%%%%%%%%%%%%%

\newcommand{\R}{\mathbb{R}}
\newcommand{\C}{\mathbb C}
\newcommand{\Z}{\mathbb Z}

%%%%%%%%%%%%%%%%%%%%%%%%%%%%%%%%%%%%%%%%%%
% other commands, miscellaneous notations %
%%%%%%%%%%%%%%%%%%%%%%%%%%%%%%%%%%%%%%%%%%

% For better real and imaginary part.

%\newcommand{\be}{\begin{equation}}
%\newcommand{\ee}{\end{equation}}

%\newcommand{\bea}{\begin{eqnarray}}
%\newcommand{\eea}{\end{eqnarray}}

\newcommand{\bee}{\begin{eqnarray*}}
\newcommand{\eee}{\end{eqnarray*}}

\numberwithin{equation}{section}
\numberwithin{theorem}{section}

%%%%%%%%%%%%%%%%%%%%%%%%%%%%%%%%%%%%%%%%%%%%%%%%%%
%%%%%%%%%%%%%%%%%----------------%%%%%%%%%%%%%%%%%
%%%%%%%%%%%%%%%--   text start   --%%%%%%%%%%%%%%%
%%%%%%%%%%%%%%%%%----------------%%%%%%%%%%%%%%%%%
%%%%%%%%%%%%%%%%%%%%%%%%%%%%%%%%%%%%%%%%%%%%%%%%%%

\begin{document}
%  \doublespacing
\onehalfspacing

\title[Global well-posedness of the defocusing,  cubic NLW outside of the ball]{Global well-posedness of the defocusing,  cubic nonlinear wave equation outside of the ball with radial data }

\date\today

\author[Guixiang Xu]{Guixiang Xu}
\address{\hskip-1.15em Guixiang Xu 
	\hfill\newline Laboratory of Mathematics and Complex Systems,
	\hfill\newline Ministry of Education,
	\hfill\newline School of Mathematical Sciences,
	\hfill\newline Beijing Normal University,
	\hfill\newline Beijing, 100875, People's Republic of China.}
\email{guixiang@bnu.edu.cn}

\author[Pengxuan Yang]{Pengxuan Yang}
\address{\hskip-1.15em Pengxuan Yang 
	\hfill\newline Laboratory of Mathematics and Complex Systems,
	\hfill\newline Ministry of Education,
	\hfill\newline School of Mathematical Sciences,
	\hfill\newline Beijing Normal University,
	\hfill\newline Beijing, 100875, People's Republic of China.}
\email{1413636745@qq.com}

\subjclass[2010]{Primary: 35L05; Secondary: 35E05, 35L20}

\keywords{Dispersive estimate; Distorted Fourier transform; Exterior domain; Strichartz estimate; Wave equation}

\begin{abstract}
We consider the defocusing, cubic nonlinear wave equation with zero Dirichlet boundary value in the exterior domain $\Omega = \R^3\backslash \overline{ B(0,1)}$. We make use of the distorted Fourier transform in  \cite{Horm:PDE IV, LiSZ:NLS, LiXZ:NLS, Sch:LW-P theory, Taylor:PDE:II} to establish the dispersive estimate and the global-in-time (endpoint) Strichartz estimate of the linear wave equation outside of the unit ball with radial data. As an application, we combine the Fourier truncation method as those in \cite{Bourgain98:FTM, GallPlan03:NLW, KenigPV00:NLW} with the energy method to show global well-posedness of radial solution to the defoucusing, cubic nonlinear wave equation outside of the unit ball in the Sobolev space $\left(\dot H^{s}_{D}(\Omega) \cap L^4(\Omega) \right)\times \dot H^{s-1}_{D}(\Omega)$ for $s>3/4$.  To the best of the authors' knowledge, it is first low regularity global well-posedness of semilinear wave equation with zero Dirichlet boundary value in the exterior domain.
\end{abstract}

\maketitle
 
%\setcounter{tocdepth}{3}
%\setcounter{secnumdepth}{5}

%\tableofcontents

\section{Introduction}
In this paper, we consider the  3D defocusing, cubic wave equation with zero Dirichlet boundary value outside of the unit ball
\begin{equation}\label{eq:nlu}
\begin{cases}
\partial^2_t u -\Delta u + u^3=0, & (t,x)\in I \times\Omega, \\
u(0,x)=u_0(x),  & x\in \Omega, \\
\partial_t u(0, x)= u_1(x), & x\in \Omega, \\
u(t,x)=0, & x\in \partial \Omega, 
\end{cases}
\end{equation}
where $0\in I$, the exterior domain $\Omega = \R^3\backslash \overline{ B(0,1)}$, the function $u: I\times \Omega \longrightarrow \R$ and initial data $(u_0, u_1)$ is radial and belongs to the Sobolev space $\dot H^{s}_{D}(\Omega)\times \dot H^{s-1}_{D}(\Omega)$. The solution of equation \eqref{eq:nlu} formly enjoys the energy conservation 
\begin{align}
E(u)(t) : = &\; \int_{\Omega} \left( \frac{1}{2} |\nabla u(t,x)|^2 + \frac12 |\partial_t u(t,x)|^2 +\frac14 |u(t,x)|^4 \right)\; dx \nonumber\\
=&\;  E(u)(0), \quad \forall\; t\in I.\label{def:energy}
\end{align}
Typically, we call \eqref{eq:nlu} the  (conformal) $\dot H^{1/2}$-critical wave equation by the scaling analysis.

There are lots of works focusing on the Cauchy problem of semilinear wave equation including the energy (sub)-critical wave equation (see  \cite{BahChe:NLW,Dod:NLW:sct1, Dod:NLW:sct2,  DuyKM:Odd NLW:SRC, JendrejL:NLW:SRC, Kenig:book, LindSog95:NLW, Roy:NLW:radial,Roy:NLW, ShatS:NLW, Sogge95:book} and reference therein),  and there are also many important results about the energy-critical wave/Schr\"odinger equations outside of the domain (see \cite{BurqLP:NLW, BurqP:def NLW: Neum prob,  DuyLaf:NLW:Neum prob, DuyYang24:NLW, KVZ16:NLS:Scat, LiSZ:NLS, LiXZ:NLS, SmithSogge95:NLW JAMS} and reference therein).  The energy conservation law plays  a crutial rule in long time dynamics in both of the Cauchy problem (IVP) and the initial boundary value problem (IBVP) of the energy-critical wave/Schr\"odinger equations.

As for the Sobolev space in the exterior domain, Killip, Visan and Zhang systematically establish the boundedness of Riesz transform,  the Littlewood-Paley theory of the Sobolev space on a convex obstacle and the equivalence between $\dot H^{s,p}_{D}(\Omega)$ with $\dot H^{s,p}(\R^d)$ for the sharp ranges of  $s$ and $p$ in \cite{KVZ16:IMRN Riesz tf} by use of the heat kernel estimate in the exterior of a convex obstacle in \cite{Zhang:Heat kernel} and classical harmonic analysis theory including the Calder\'on-Zygnumd theory and Mikhlin's multiplier theorem in \cite{Gr14:CFA, MuscaluS:book1, Stein:book:SI}, etc, (see also \cite{IvPlan17:CPDE: heat flow, LiSZ:NLS}). We will recall more useful properties of the Sobolev space $\dot H^{s,p}_{D}(\Omega)$ in \cite{KVZ16:IMRN Riesz tf, LiSZ:NLS, LiXZ:NLS} in Section \ref{sect:DF transf}.

 We now give the definition of local-well-posedness of \eqref{eq:nlu} in  $\dot H^{s}_{D}(\Omega)\times \dot H^{s-1}_{D}(\Omega)$.

\begin{definition}
The equation \eqref{eq:nlu}  is said to be locally well-posed in $\dot H^{s}_{D}(\Omega)\times \dot H^{s-1}_{D}(\Omega)$  if there exists an open interval $I\subset \R$ containing $0$ such that
$(1)$ there is a unique solution in $\dot H^{s}_{D}(\Omega)\times \dot H^{s-1}_{D}(\Omega)$;
$(2)$ the solution is continuous in time, that is $(u, \partial_t u)\in C\left(I; \; \dot H^s_{D}(\Omega) \times \dot H^{s-1}_{D}(\Omega)\right)$;
$(3)$ the solution depends continuously on the initial data. 
\end{definition}

By the Strichartz estimate in Theorem \ref{thm:St est} and standard Picard fixed point argument, we can obtain the following local well-posedness result of \eqref{eq:nlu} in $\dot H^s_{D}(\Omega) \times \dot H^{s-1}_{D}(\Omega)$  with $\frac12\leq s< \frac32$.
\begin{theorem}\label{thm:nlu lwp}
	The equation \eqref{eq:nlu} is locally well-posed in  $\dot H^{1/2}_{D}(\Omega)\times \dot H^{-1/2}_{D}(\Omega)$ on some interval $I=\left(-T, T\right)$. Moreover, the regularity of initial data is enough to give a lower bound on the time of well-posedness, that is, there exists some positive lifespan $T = T\big(\|(u_0, u_1)\|_{\dot H^{s}_{D}(\Omega) \times \dot H^{s-1}_{D}(\Omega)}\big)$ for any $\frac12 <s<\frac32$.
\end{theorem}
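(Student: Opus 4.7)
The plan is a standard Picard fixed-point argument on the Duhamel formulation of \eqref{eq:nlu}, working in a Strichartz space built from Theorem \ref{thm:St est}. At the critical regularity $s=\tfrac12$, the natural choice is the wave-admissible pair $(q,r)=(4,4)$ in three dimensions: it sits on the sharp line $\tfrac{2}{q}+\tfrac{2}{r}=1$ and scales at $\dot H^{1/2}$ via $\tfrac{1}{q}+\tfrac{3}{r}=1=\tfrac{3}{2}-\tfrac{1}{2}$, so the cubic nonlinear estimate is purely algebraic, $\|u^{3}\|_{L^{4/3}_{t,x}}=\|u\|_{L^{4}_{t,x}}^{3}$, with the dual pair $(4/3,4/3)$. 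Setting $X_T := C([-T,T];\dot H^{1/2}_{D}(\Omega))\cap L^{4}_{t,x}([-T,T]\times\Omega)$ and letting $\Phi$ denote the solution operator given by the Duhamel formula for the Dirichlet wave propagator, Theorem \ref{thm:St est} yields
\begin{equation*}
\|\Phi(u)\|_{X_T} \leq C_0\,\|(u_0,u_1)\|_{\critsp} + C_0\,\|u\|_{X_T}^{3},
\end{equation*}
with the companion difference estimate obtained from $u^{3}-v^{3}=(u-v)(u^{2}+uv+v^{2})$ and H\"older.

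To close a contraction on the ball of $X_T$ of radius $2C_0\|(u_0,u_1)\|_{\critsp}$ it suffices to make the free-evolution piece small in $L^{4}_{t,x}([-T,T]\times\Omega)$. For the critical statement I would use the usual density argument: for smooth compactly supported approximants the global Strichartz norm from Theorem \ref{thm:St est} is finite, the slab norm on $[-T,T]$ tends to zero as $T\downarrow 0$ by dominated convergence, and the residual gap on $\critsp$ is absorbed into the contraction constant. Uniqueness and continuous dependence are then immediate from the difference estimate. This yields the first assertion.

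For the quantitative lifespan $T=T(\|(u_0,u_1)\|_{\dot H^s_{D}\times\dot H^{s-1}_{D}})$ in the subcritical range $\tfrac12<s<\tfrac32$, I would rerun the Picard scheme at $\dot H^s$-scaling, using a strictly admissible (non-sharp) Strichartz pair $(q_s,r_s)$ with $q_s<\infty$. A H\"older step in time from $L^{q_s}_t$ to $L^{4}_t$, combined where necessary with a Sobolev embedding in space — available on $\Omega$ via the equivalence $\dot H^{s,p}_{D}(\Omega)\simeq \dot H^{s,p}(\R^{3})$ from \cite{KVZ16:IMRN Riesz tf} — produces a factor $T^{\theta(s)}$ with $\theta(s)>0$, so that
\begin{equation*}
\|\text{free evolution}\|_{L^{4}_{t,x}([-T,T]\times\Omega)} \leq C\,T^{\theta(s)}\,\|(u_0,u_1)\|_{\dot H^s_{D}\times\dot H^{s-1}_{D}}.
\end{equation*}
Choosing $T$ a decreasing function of the $\dot H^{s}_{D}\times\dot H^{s-1}_{D}$-norm secures the required smallness, and the cubic nonlinear bound at $\dot H^s$-level follows from the fractional Leibniz rule for $u^{3}$, transferred to $\Omega$ by the same Sobolev-space equivalence.

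The main obstacle is really a matter of inputs: the scheme depends on Theorem \ref{thm:St est} supplying the endpoint pair $(4,4)$ on the exterior of the ball with radial data (which is precisely an endpoint case of Keel--Tao type in the presence of a boundary), together with a sufficiently rich family of admissible pairs at $\dot H^s$-scaling for each $\tfrac12<s<\tfrac32$ to harvest the $T^{\theta(s)}$ gain. Once these Strichartz inputs are granted, the contraction, difference, persistence-of-regularity, and continuous dependence arguments are all of standard type.
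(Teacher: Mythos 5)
Your proposal is essentially the same argument the paper has in mind: the paper simply asserts Theorem~\ref{thm:nlu lwp} follows ``by the Strichartz estimate in Theorem~\ref{thm:St est} and standard Picard fixed point argument'' without further detail, and the $L^4_{t,x}$-based contraction you describe is exactly what the authors carry out explicitly for the analogous Propositions~\ref{prop:w:gwp} and~\ref{prop:v:lwp} in Section~\ref{sect:gwp}. One small caveat in wording: $(4,4)$ is a sharp-admissible (but not Keel--Tao endpoint) pair, and it is covered by Theorem~\ref{thm:St est} since that theorem only excludes $r=\infty$; the true endpoint $L^2_tL^\infty_x$ estimate (Theorem~\ref{thm:endp est}) is not needed here.
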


The local well-posedness theory together with the energy conservation law implies that

\begin{theorem}
		The equation \eqref{eq:nlu} is globally well-posed in  in $\left(\dot H^1_{D}(\Omega) \cap L^4(\Omega)\right) \times   L^2(\Omega)$.
	
\end{theorem}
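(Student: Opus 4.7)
The plan is to combine the local well-posedness of Theorem~\ref{thm:nlu lwp} at regularity $s=1$ with the conservation law \eqref{def:energy}. Given data $(u_0,u_1)\in \bigl(\dot H^1_D(\Omega)\cap L^4(\Omega)\bigr)\times L^2(\Omega)$ the energy $E(u)(0)$ is finite, and each of its three summands controls $\|\nabla u(t)\|_{L^2}^2$, $\|\partial_t u(t)\|_{L^2}^2$, and $\|u(t)\|_{L^4}^4$ respectively. Taking $s=1$ in Theorem~\ref{thm:nlu lwp} produces a unique local solution on $(-T_0,T_0)$, where $T_0=T\bigl(\|(u_0,u_1)\|_{\dot H^1_D\times L^2}\bigr)>0$ depends only on the $\dot H^1_D\times L^2$ norm of the data.

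The key step is to verify that \eqref{def:energy} holds for this local solution. For smooth radial data one multiplies \eqref{eq:nlu} by $\partial_t u$ and integrates over $\Omega$: the Dirichlet condition forces $\partial_t u|_{\partial\Omega}=0$ so the boundary term from the divergence theorem vanishes, while the nonlinearity contributes $\int_\Omega u^3\partial_t u\,dx=\tfrac14\tfrac{d}{dt}\|u\|_{L^4}^4$. To extend the identity to general data in $\bigl(\dot H^1_D\cap L^4\bigr)\times L^2$ I would use a density argument: approximate $(u_0,u_1)$ by smooth radial data satisfying the Dirichlet compatibility condition, apply the identity to the corresponding classical solutions on the common interval of existence, and pass to the limit via the continuous dependence statement of Theorem~\ref{thm:nlu lwp} together with the Strichartz-based control on $u$ supplied by Theorem~\ref{thm:St est}.

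Conservation then yields, on the interval of existence,
\[
\bigl\|\bigl(u(t),\partial_t u(t)\bigr)\bigr\|_{\dot H^1_D\times L^2}^2 + \tfrac12\|u(t)\|_{L^4}^4 \le 2E(u)(0),
\]
so the $\dot H^1_D\times L^2$ norm of the solution cannot blow up. Re-applying Theorem~\ref{thm:nlu lwp} from any $t^\ast$ near the endpoint of the maximal interval produces an extension of length at least $T\bigl(\sqrt{2E(u)(0)}\bigr)$, a fixed positive number independent of $t^\ast$. Iterating this step covers any compact interval of $\R$ in finitely many iterations, giving a global solution that at every time belongs to $\bigl(\dot H^1_D(\Omega)\cap L^4(\Omega)\bigr)\times L^2(\Omega)$ with norm bounded in terms of $E(u)(0)$.

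The main obstacle is the rigorous justification of energy conservation at the Picard-scheme regularity, and specifically the convergence of the $L^4$ part of the energy under the density approximation. The $\dot H^1_D\times L^2$ part of the energy passes to the limit automatically from the continuous dependence of Theorem~\ref{thm:nlu lwp}, but handling the $L^4$ summand requires either the Strichartz estimate of Theorem~\ref{thm:St est} or interpolation with the Sobolev embedding $\dot H^1_D(\Omega)\hookrightarrow L^6(\Omega)$ provided by \cite{KVZ16:IMRN Riesz tf} to secure strong $L^4$ convergence on compact time intervals. Once this is in place the iteration is purely quantitative and yields the claimed global well-posedness.
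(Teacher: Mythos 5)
Your argument is correct and matches the paper's intended approach: the paper simply asserts that the theorem follows from the local well-posedness of Theorem~\ref{thm:nlu lwp} at $s=1$ together with the energy conservation law \eqref{def:energy}, which is exactly the combination you carry out. The extra details you supply (the vanishing boundary term under the Dirichlet condition, the density argument with care for the $L^4$ summand, and the fixed-lifespan iteration) are the standard ingredients the paper leaves implicit.
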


%\begin{conjecture}\label{conj:nlu gwp}
%	the equation \eqref{eq:nlu} is globally well-posedness and scatters in $\dot H^{1/2}_{D}(\Omega)\times \dot H^{-1/2}_{D}(\Omega)$.
%\end{conjecture}

As shown in the local/global well-posedness theory in \cite{BurqP:def NLW: Neum prob, DuyLaf:NLW:Neum prob, SmithSogge95:NLW JAMS}, the (local-in-time) Strichartz estimate is one of useful estimates in long time behavior of the wave equations. In fact, the dispersive estimate and the (local-in-time) Strichartz estimate  for the wave/Schr\"odinger equations in the (exterior) domain themselves are extremely complicated, we can refer to \cite{BlairSS:NLS:St, Burq03: NLW CPDE, BurqLP:NLW, DuyYang24:LW Str, HidanoMSSZ:AST est, Iv:NLS St est, IvLasLebP:LW:dis est, IvLebeauP:LW:dis est,  SmithSogge95:NLW JAMS, SmithSogge00:NLW CPDE, SmithSW:AST est, StaT:NLS St est}  and reference therein. 

In this paper, we consider the Dirichlet boundary value problem \eqref{eq:nlu} with radial data outside of the unit ball, there is a crucial analysis tool, that is, the explicit distorted Fourier transform adapted to the Dirichlet-Laplacian operator $-\Delta_{\Omega}$ under the radial assumption (see also \cite{LiSZ:NLS, LiXZ:NLS, Taylor:PDE:II}), to simplify the proof of the dispersive estimate and the global-in-time (endpoint) Strichartz estimate (see Theorem \ref{thm:St est} and Theorem \ref{thm:endp est}) of linear wave equation  outside of the unit ball with radial data. As for the distorted Fourier transform adapted to the Schr\"odinger operator $-\Delta+V$ for some potential $V$, we can refer to \cite{Agm75:Sch Op, CollG:NLS, DonK:PNLW, KriST:NLW bup, KriST:NLS}, and reference therein. The Schr\"odinger operator $-\Delta+V$ is also discussed in \cite{Horm:PDE IV, Melrose:book}.

As an application of the global-in-time (endpoint) Strichartz estimate and the energy method, we can make use of the Fourier truncation method as those in \cite{Bourgain98:FTM, GallPlan03:NLW, KenigPV00:NLW}  to show global well-posedness of equation \eqref{eq:nlu} with radial data in low regularity Sobolev space $\left(\dot H^{s}_{D}(\Omega) \cap L^4(\Omega) \right) \times \dot H^{s-1}_{D}(\Omega)$ for some $s<1$,  where the solution maybe have infinite energy. Unlike the energy-critical wave/Schr\"odinger problem, there is no a priori conservation law for the rough solution of \eqref{eq:nlu} to ensure that the low regularity Sobolev norm of solution will remain bounded during the lifespan of the solution.

Main result in this paper is the following.

\begin{theorem}\label{thm:nlu gwp} Let $\frac{3}{4}<s<1$, then
	the equation \eqref{eq:nlu} with radial data is globally well-posed in $\left(\dot H^{s}_{D}(\Omega) \cap L^4(\Omega) \right) \times \dot H^{s-1}_{D}(\Omega)$. More precisely,  for arbitrarily large time $T$, the solution $u$ obeys the following estimate
	\begin{align*}
	\left \| (u, \partial_t u) \right\|_{C\left([0, T); \; \dot H^{s}_{D}(\Omega)\right) \times C\left([0, T); \; \dot H^{s-1}_{D}(\Omega)\right)  } \lesssim T^{\frac{3(1-s)(2s-1)}{4s-3}}.
	\end{align*}
\end{theorem}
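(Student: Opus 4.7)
The strategy is Bourgain's Fourier-truncation (``high/low frequency'') argument of \cite{Bourgain98:FTM,GallPlan03:NLW,KenigPV00:NLW}, carried out in the exterior domain via the distorted-Fourier Littlewood-Paley calculus and the new global-in-time endpoint Strichartz estimate established earlier in the paper. Fix $T>0$ and a frequency parameter $N=N(T)$ to be optimised. Decompose
\[
u_0 = P_{\leq N} u_0 + P_{>N} u_0 =: \phi_0 + \psi_0,\qquad u_1 = \phi_1 + \psi_1,
\]
where $P_{\leq N}$, $P_{>N}$ are the Littlewood--Paley projections built from the spectral calculus of $-\Delta_\Omega$. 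By the LP theory and the equivalence $\dot H^{s,p}_D(\Omega)\simeq \dot H^{s,p}(\R^3)$ of \cite{KVZ16:IMRN Riesz tf,LiSZ:NLS}, the low-frequency part lies in the energy space with
\[
\|\phi_0\|_{\dot H^1_D}\lesssim N^{1-s}\|u_0\|_{\dot H^s_D},\quad \|\phi_1\|_{L^2}\lesssim N^{1-s}\|u_1\|_{\dot H^{s-1}_D},\quad \|\phi_0\|_{L^4}\lesssim \|u_0\|_{L^4},
\]
while $(\psi_0,\psi_1)$ is bounded in the original rough space $\dot H^s_D\times\dot H^{s-1}_D$ and represents the ``small'' but rough part of the data.

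\textbf{Perturbative decomposition.} Let $\phi(t)$ be the global solution of \eqref{eq:nlu} with data $(\phi_0,\phi_1)$, furnished by the preceding energy-class theorem; by energy conservation,
\[
E(\phi)(t)\equiv E(\phi)(0)\le C\bigl(N^{2(1-s)}+\|u_0\|_{L^4}^4\bigr)\quad\text{for all } t.
\]
Writing $u=\phi+v$, the remainder satisfies
\[
\partial_t^2 v-\Delta v=-3\phi^2 v-3\phi v^2-v^3,\qquad (v,\partial_t v)|_{t=0}=(\psi_0,\psi_1),
\]
which I would split once more as $v=v_L+v_N$, with $v_L$ the free Dirichlet wave evolution of $(\psi_0,\psi_1)$ outside the ball and $v_N$ the Duhamel contribution. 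The key smoothing property is that $v_N$ gains a full derivative relative to $v_L$: plugging the energy and Strichartz bounds on $\phi$, $v_L$ into the Duhamel formula, and using the Strichartz estimates of Theorems \ref{thm:St est}--\ref{thm:endp est} together with H\"older in spacetime and the product/chain-rule on $\dot H^{s,p}_D(\Omega)$ (accessible via the above $\R^3$-equivalence), one shows $v_N\in C_t\dot H^1_D\cap C_t^1L^2$ on any unit-length interval.

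\textbf{Iteration and energy increment.} I would iterate on the intervals $I_k=[k,k+1]$, absorbing $v_N$ at each step into the ``smooth'' component $\phi$ and letting the linear wave propagation of the high-frequency piece play the role of the new $v_L$. Differentiating $E(\phi+v_N)$ along the flow produces a source controlled by
\[
\int_{I_k}\!\int_\Omega |\partial_t(\phi+v_N)|\,|v_L|\,(|\phi+v_N|+|v_L|)^2\,dx\,dt,
\]
which I would bound by H\"older in time, the energy/$L^4$ control of $\phi+v_N$, Sobolev embedding, and the Strichartz norms of $v_L$ (in particular the endpoint $L^2_tL^\infty_x$). Careful bookkeeping then yields a per-interval increment of the form $\Delta E\lesssim N^{-\alpha}$ for some $\alpha=\alpha(s)>0$, so after $\lfloor T\rfloor$ iterations $E(\phi+v_N)(T)\lesssim N^{2(1-s)}+T\,N^{-\alpha}$. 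Balancing the two contributions with $N\sim T^{1/(\alpha+2(1-s))}$ and computing $\alpha$ explicitly from the Strichartz exponents gives the announced polynomial bound with exponent $3(1-s)(2s-1)/(4s-3)$.

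\textbf{Main obstacle.} The technical heart is the iteration step: one must (i) verify that $v_N$ genuinely gains a full derivative uniformly in the iteration count and in $N$, which requires sharp use of the Dirichlet/radial endpoint Strichartz estimate together with the Littlewood--Paley and fractional product rules on $\dot H^{s,p}_D(\Omega)$; and (ii) ensure that the per-step constants do not accumulate over the $\lfloor T\rfloor$ iterations, so that $\Delta E\lesssim N^{-\alpha}$ is truly independent of the step. The threshold $s>3/4$ is precisely the condition that the resulting net power of $N$ in the energy increment is negative (equivalently, $4s-3>0$ in the denominator of the final exponent).
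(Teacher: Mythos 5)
Your plan is a genuine Fourier-truncation argument, but it is not the one in the paper, and the difference matters for the difficulty of the proof.

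In the paper the nonlinear flow is put on the \emph{high}-frequency data. Since \eqref{eq:nlu} is $\dot H^{1/2}$-critical and $s>\frac12$, the high-frequency data $(P_{>2^J}u_0,P_{>2^J}u_1)$ is small in $\dot H^{1/2}_{D}\times\dot H^{-1/2}_{D}$, so the corresponding solution $w$ of the \emph{exact} cubic NLW is globally defined with globally small Strichartz norms (Proposition~\ref{prop:w:gwp}: $\|w\|_{L^2_tL^6_x(\R\times\Omega)}\lesssim 2^{J(1/2-s)}$). The low-frequency remainder $v$, with data in the energy class, then satisfies $\partial_t^2v-\Delta v+v^3=-3v^2w-3vw^2$, and one single energy inequality over $[0,T)$ (Proposition~\ref{prop:v:energy}) closes because the forcing is controlled by the \emph{global} smallness of $w$. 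No interval-by-interval iteration is needed.

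You instead run the nonlinear flow on the \emph{low}-frequency part $\phi$, whose energy is large ($\sim N^{2(1-s)}$), and propose to iterate over unit time intervals while absorbing the smoothed Duhamel piece $v_N$ into $\phi$. This is the original Bourgain scheme and can in principle work, but you should be aware that it is strictly harder here and your sketch has real gaps at exactly the points you flag as the ``main obstacle'':
\begin{itemize}
\item Your forced equation is $\partial_t^2 v-\Delta v=-3\phi^2 v-3\phi v^2-v^3$, whose dominant source is the \emph{linear-in-$v$} term $\phi^2 v$ with large coefficient. Unlike the paper's $w$, your $\phi$ has no globally small Strichartz norm; over a unit interval $\|\phi\|_{L^2_tL^6_x}\lesssim E(\phi)^{1/2}\sim N^{1-s}$, and after absorbing $v_N$ the energy of $\phi$ grows, so the per-step constants are \emph{not} a priori uniform. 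The claimed bound $\Delta E\lesssim N^{-\alpha}$ with $\alpha$ independent of the step and of the accumulated energy is the whole content of the argument and must be proved, not asserted.
\item The smoothing claim ``$v_N$ gains a full derivative'' requires placing $\phi^2 v_L$, $\phi v_L^2$, $v_L^3$ in a dual-Strichartz space at the $\dot H^1$ level, with $v_L$ only in $\dot H^s$, $s<1$. This is not automatic: e.g.\ $\phi^2 v_L\in L^1_tL^2_x$ needs $v_L\in L^2_tL^3_x$ on the slab, which the endpoint estimate of Theorem~\ref{thm:endp est} gives only from $L^2_x$-level data, not from $\dot H^s_D$. You would need a genuinely bilinear/trilinear estimate (or a different pair of dual exponents) here, and nothing in your sketch sets this up.
\item For the absorption step you must also verify $v_N(k+1)\in L^4(\Omega)$, since the global energy-class theory requires $(\dot H^1_D\cap L^4)\times L^2$ data; on the unbounded exterior domain $\dot H^1_D\hookrightarrow L^6$ but not $L^4$, so this is an extra check.
\item You never compute $\alpha$, so the threshold $s>\frac34$ and the exponent $\tfrac{3(1-s)(2s-1)}{4s-3}$ are not actually derived.
\end{itemize}
In short, your proposal is a valid alternative strategy in spirit, but it trades the paper's one-shot energy estimate (made possible by solving the cubic equation on the \emph{globally small} high-frequency piece) for an iteration whose uniformity and per-step smoothing are left unproved; these are precisely the hard points, and the paper's choice of decomposition is what lets it avoid them.
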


\begin{remark} For $s\in (\frac12, \frac32)$, to see that $u(t,x)=0$ for $x\in \partial \Omega$, we may use the relation that $\dot H^{s}_{D}(\Omega) =  \dot H^{s}_{00}(\Omega) $ in   Theorem \ref{thm:Sob eq}, where $  \dot H^{s}_{00}(\Omega) $ is defined as the completion of $C^{\infty}_{c}(\Omega)$ in $\dot H^{s,p}(\R^3)$, that is, one must extend the function outside of the unit ball as zero. (See Definition \ref{def:Sob 0 extd}).
	\end{remark}

To the best of the authors' knowledge, it is first low regularity global well-posedness of semilinear wave equation with zero Dirichlet boundary value in the exterior domain.  We will take the strategy from \cite{Bourgain98:FTM, GallPlan03:NLW, KenigPV00:NLW}  to decompose the solution of \eqref{eq:nlu} into small global solution $w$ of \eqref{eq:nlu} with high-frequency-localized initial data and the remaider with low-frequency-localized initial data, which energy is almost conserved for arbitrarily large time $T$ if 
 choosing the  frequency cutoff carefully, which is similar to the well-known almost conservation law initially developed by Colliander, Keel, Staffilani, Takaoka and Tao in \cite{CKSTT:ACL}.

\subsection*{Notation}\label{sect:notation}Throughout the paper, we  use the notation $X \lesssim Y$, or $Y \gtrsim X$ to denote the statement that $X\leq C Y$  for some constant $C$, which may vary from line to line.    We use $X\approx Y$ to denote the statement $X\lesssim Y \lesssim X$.

Lastly, this paper is organized as follows. In Section  \ref{sect:DF transf}, we recall the distorted Fourier transform adapted to the Dirichlet-Laplacian operator $-\Delta_{\Omega}$ in the radial case and the related Littlewood-Paley theory.  In Section \ref{sect:linear est}, we use the distorted Fourier transform to prove the dispersive estimate and the global-in-time (endpoint) Strichartz estimate to linear wave equation outside of the unit ball with radial data, which gives a short proof of the global-in-time Strichartz estimate when the initial data is radial. In Section \ref{sect:gwp}, we combine the Fourier truncation method, the energy method with the (endpoint) Strichartz estimate to show that  the solution to the difference equation with low-frequency-localized initial data has almost conserved energy and can exist for arbitrarily large time $T$,  which can be used to complete the proof of Theorem \ref{thm:nlu gwp} together with the existence of global small solution with high-frequency-localized initial data. In Appendix \ref{app:A}, we show the propogation of the half-wave operator with radial data in the whole space $\R^3$.

\section{Distorted Fourier Transform and Littlewood-Paley theory}\label{sect:DF transf}
In this Section, we consider the 3D Dirichlet-Laplacian operator outside of the unit ball $\Omega=\R^3\backslash \overline{B(0,1)}$ with domain $H^2(\Omega)\cap H^{1}_{0}(\Omega)$, which  we denote by $-\Delta_{\Omega}$. we recall the distorted Fourier transform adapted to the operator $-\Delta_{\Omega}$ in the radial case from \cite{LiSZ:NLS, LiXZ:NLS} and the  Sobolev space $\dot H^{s,p}_{D}(\Omega)$ in \cite{KVZ16:IMRN Riesz tf}. We can also refer to \cite{Agm75:Sch Op,BPSS:invers pot,Horm:PDE IV, KMVZZ:Sch op:18, ReedS:book:IV,Taylor:PDE:II} for spectral properties of Schr\"odinger operator and  to \cite{BahChe:book, Gr14:CFA, MuscaluS:book1, Stein:book:SI} for  classical Fourier analysis and the Littlewood-Paley theory on the whole space.

Note that the Dirichlet-Laplacian operator $-\Delta_{\Omega}$ is positive, self-adjoint operator, 
its spectral theory  is similar as that of the Laplacian operator $-\Delta_{\R^3}$,  we can also refer to \cite{Agm75:Sch Op, DuyYang24:LW Str,KMVZZ:Sch op:18, LaxPhi:book, LiSZ:NLS,LiXZ:NLS,Sch:LW-P theory}, and reference therein.  The essential spectrum of the operator $-\Delta_{\Omega}$ is $[0, \infty)$; The operator $-\Delta_{\Omega}$ has no positive eigenvalues embedded into $(0, \infty)$ and has no negative eigenvalues; Moreover $0$ is not an eigenvalue or a resonance of  the operator $-\Delta_{\Omega}$.

Now we follow the argument in \cite{LiSZ:NLS, LiXZ:NLS} and \cite{Agm75:Sch Op,KMVZZ:Sch op:18, KVZ16:IMRN Riesz tf} to recall the distorted Fourier transform and Sobolev space associated to the Dirichlet-Laplacian operator $-\Delta_{\Omega}$. The spectral resolution for radial functions on $\Omega=\R^3\backslash \overline{B(0,1)}$ is expressed simply by the radial, generalized  eigenfunctions
$$-\Delta\; e_{\lambda} = \lambda^2\; e_{\lambda}$$
for all $\lambda>0$,  which satisfies the Sommerfeld radiation condition (See Chapter $9$ in \cite{Taylor:PDE:II}), namely
%
%(i.e. $|r \cdot e_{\lambda}(r)|\leq C$ and $r\cdot \left(\frac{\partial}{\partial r} e_{\lambda} - i \lambda e_{\lambda}\right) \rightarrow 0$ as $r\rightarrow \infty$), 
\begin{equation}\label{quan:eign funt}
e_{\lambda} (r)=\frac{\sin \lambda(r-1)}{r}, \quad r=|x|\geq 1.
\end{equation}
We can refer to  \cite{Agm75:Sch Op, LaxPhi:book, Melrose:book}  (see also \cite{Ike:eigexp}) for more introductions about the generalized eigenfunctions, which behave like the plane waves. We can also refer to \cite{Sch:LW-P theory}  for the Littlewood-Paley theory associated to  the distorted Fourier transform.

For the radial, tempered distributions $f\in \mathcal{S}'(\R^3)$, supported on $\Omega$, we denote the distorted Fourier transformation $\mathcal{F}_Df(\lambda)$ for $\lambda>0$ by 
\begin{align}
\mathcal{F}_D f(\lambda):= &\; \frac{\sqrt{2}}{\sqrt{\pi}} \int^{\infty}_{1} e_{\lambda}(s) f(s)\; s^2 \; ds  \nonumber\\
= &\;  \frac{\sqrt{2}}{\sqrt{\pi}} \int^{\infty}_{1}  \frac{\sin \lambda(s-1)}{s} f(s)\; s^2 \; ds.\label{def:DF}
\end{align}  

Note that the following resolution of identity
\begin{align}
\frac{2}{\pi}\int^{\infty}_{0} e_{\lambda}(r) \, e_{\lambda}(s)\, d\lambda= &\;  \frac{1}{\pi}\int^{\infty}_{-\infty} e_{\lambda}(r) \, e_{\lambda}(s)\, d\lambda  \nonumber \\
= & \; \frac{1}{2\pi r s} \int^{\infty}_{-\infty} \big[ \cos{\lambda(r-s)}  -\cos{\lambda (r+s-2)} \big]\, d\lambda  \nonumber \\
= &\; \frac{\delta(r-s)}{s^2}, \; \text{for}\; r, s>1 \label{Id:resolution}
\end{align}
 from which  it follows that $$\mathcal{F}^{-1}_D\mathcal{F}_D f = f$$
for the radial function $f\in \mathcal{S}(\R^3)$ supported in $\Omega$, where $\mathcal{F}^{-1}_D$ is the formal adjoint, defined on the tempered distributions $g$ as the restriction to $r =|x|\geq 1 $ of 
\begin{align}
\mathcal{F}^{-1}_D g(r)= &\;  \frac{\sqrt{2}}{\sqrt{\pi}} \int^{\infty}_{0} e_{\lambda}(r) g(\lambda)\; d\lambda \nonumber \\ 
= &\;  \frac{\sqrt{2}}{\sqrt{\pi}} \int^{\infty}_{0}  \frac{\sin \lambda(r-1)}{r} g(\lambda)\; d\lambda.\label{def:inverse DF}
\end{align}  

By \eqref{def:DF}, we know that $\mathcal{F}_D f$ is an odd function in $\lambda$ if $f$ is a radial function supported on $\Omega$, then if $g\in \mathcal{S}(\R)$ is an odd function, we have the similar estimate $$\mathcal{F}_D\mathcal{F}^{-1}_D  g = g$$  as that in \eqref{Id:resolution} , More precisely, we have
\begin{align}
\frac{2}{\pi} \int^{\infty}_{1}\int^{\infty}_{0}&  e_{\lambda}(s) e_{\mu}(s) g(\mu) \; d\mu\; s^2\; ds \nonumber \\
 = &\; \frac{1}{\pi} \int^{\infty}_{1}\int^{\infty}_{-\infty} e_{\lambda}(s) e_{\mu}(s) g(\mu) \; d\mu\; s^2\; ds \nonumber \\
 = &\; \frac{1}{2\pi} \int^{\infty}_{-\infty} \int^{\infty}_{1} \big[ \cos{(\lambda-\mu)(s-1)} -\cos{(\lambda+\mu)(s-1)}\big]  ds\; g(\mu) d\mu\;  \nonumber \\
  = &\;  \frac{1}{4\pi}\int^{\infty}_{-\infty}  \int^{\infty}_{-\infty} \big[ \cos{(\lambda-\mu)s} -\cos{(\lambda+\mu)s}\big]  ds\, g(\mu) d\mu\;  \nonumber \\
    = &\;  \frac12 \int^{\infty}_{-\infty}  \big[ \delta(\lambda-\mu) -\delta(\lambda+\mu)\big]\, g(\mu) d\mu 
=\;  g(\lambda), \label{Id:resol2}
\end{align}
where we use the fact that the function $g$ is an odd one in the last equality. This implies for any radial function $f\in C^{\infty}_{c}(\Omega)$ that
\begin{align*}
 \int^{\infty}_{1} |f(s)|^2 \; s^2\; ds
=& \;   \frac{2}{\pi}  \int^{\infty}_{1}  \int^{\infty}_{0} e_{\lambda}(s) \left (\mathcal{F}_D f\right)(\lambda) \; d\lambda \cdot   \int^{\infty}_{0} e_{\mu}(s) \overline{\left(\mathcal{F}_D f\right)(\mu)} \; d\mu \; s^2\; ds \\
=& \;    \int^{\infty}_{0}  \frac{2}{\pi}  \int^{\infty}_{1}  \int^{\infty}_{0}  e_{\lambda}(s) e_{\mu}(s)  \overline{\left(\mathcal{F}_D f\right)(\mu)} \; d\mu\; s^2\; ds \cdot    \left(\mathcal{F}_D f\right)(\lambda) \; d\lambda\;  \\
=& \;      \int^{\infty}_{0}    \left(\mathcal{F}_D f\right)(\lambda) \overline{\left(\mathcal{F}_D f\right)(\lambda)} \; d\lambda \\
= & \;      \int^{\infty}_{0}    \left| \left(\mathcal{F}_D f\right)(\lambda)\right|^2 \; d\lambda. 
\end{align*}
Consequently, $f\longrightarrow \mathcal{F}_D f $ induces an isometric map
\begin{align*}
\mathcal{F}_D : \; L^{2}\left([1, \infty), s^2\, ds\right) \longrightarrow L^{2}\left([0, \infty), \, d\lambda\right).
\end{align*}
It is worth noting that unlike classical Fourier transform on the whole space, the spectral supports are not additive under function multiplication for the above (inverse) distorted Fourier transform.

Given a bounded function $m(\lambda)$, which for convenience we assume to be defined on all of $\R$ and even in $\lambda$, and radial function $f\in C^{\infty}_{c}(\Omega)$, we define
\begin{align*}
m\left(\sqrt{-\Delta_{\Omega}}\right)f (r) =  \mathcal{F}^{-1}_D \big(m(\cdot)\, \mathcal{F}_D  f \big)(r).
\end{align*}
This defines a functional calculus on $L^2_{rad}(\Omega)$ and takes the expression as
\begin{align*}
m(\sqrt{-\Delta_{\Omega}}) f (r) = \int^{\infty}_{1} K_m(r, s) f(s)\, s^2 \, ds
\end{align*}
with 
\begin{align*}
	K_m(r, s) = \frac{2}{\pi}\, \int^{\infty}_{0} e_{\lambda}(r)\, e_{\lambda}(s)\, m(\lambda)\; d\lambda. 
	\end{align*}

In general, we have the following Mikhlin Multiplier theorem.
\begin{theorem}[\cite{KVZ16:IMRN Riesz tf, LiSZ:NLS}]\label{thm:multiplier}
	Suppose $m: [0, \infty) \rightarrow \C$ obeys 
	\begin{align*}
	\left|\partial^{k}_{\lambda} m (\lambda)\right| \lesssim \lambda^{-k}
	\end{align*}
	for all integer $ k \in [0, 2] $. Then $m(\sqrt{-\Delta_{\Omega}}) $, which we define via the $L^2$ functional calculus, extends uniquely from $L^2(\Omega)\cap L^{p}(\Omega)$ to a bounded operator on $L^p(\Omega)$, for all $1<p<\infty$.
	\end{theorem}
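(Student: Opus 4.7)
The plan is to deduce the $L^p$ boundedness from classical Calder\'on--Zygmund theory, leveraging the Gaussian upper bound for the heat kernel $e^{t\Delta_\Omega}$ on the exterior of the convex obstacle $\bar B(0,1)$ that is cited in the paper from Zhang's work. The $L^2$ boundedness of $m(\sqrt{-\Delta_\Omega})$ is automatic from the spectral theorem (the hypothesis gives $\|m\|_{L^\infty}\lesssim 1$), so the task is to upgrade this to $L^p$ via kernel estimates.

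First, I would use a Littlewood--Paley decomposition $m(\lambda)=\sum_{j\in\Z} m(\lambda)\psi(2^{-j}\lambda)$ for a smooth bump $\psi$ supported in $\{1/2\le\lambda\le 2\}$, and for each dyadic piece represent it through the heat semigroup via a subordination formula of the form
\begin{align*}
m(\sqrt{-\Delta_\Omega})\,\psi\!\bigl(2^{-j}\sqrt{-\Delta_\Omega}\bigr)
=\int_0^\infty \Phi_j(t)\,e^{t\Delta_\Omega}\,dt,
\end{align*}
where $\Phi_j$ is obtained from $m\cdot\psi(2^{-j}\cdot)$ by a Mellin/Laplace transform. The hypothesis $|\partial_\lambda^k m|\lesssim \lambda^{-k}$ for $k\le 2$ translates into quantitative decay of $\Phi_j(t)$ in $t$ (uniformly in $j$ after the natural rescaling).

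Second, inserting Zhang's Gaussian bound
\begin{align*}
\bigl|e^{t\Delta_\Omega}(x,y)\bigr|\lesssim t^{-3/2}\exp\!\left(-c\frac{|x-y|^2}{t}\right),\qquad x,y\in\Omega,\ t>0,
\end{align*}
and summing over the dyadic pieces, I would verify that the distribution kernel $K_m(x,y)$ obeys the standard Calder\'on--Zygmund estimates
\begin{align*}
|K_m(x,y)|\lesssim |x-y|^{-3},\qquad |\nabla_y K_m(x,y)|\lesssim |x-y|^{-4},
\end{align*}
at least in an integrated H\"ormander form. Combining this with the $L^2$ bound, the Calder\'on--Zygmund theorem then gives $L^p(\Omega)\to L^p(\Omega)$ for $1<p<\infty$.

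The hard part is verifying the H\"ormander smoothness condition up to the boundary $\partial\Omega$: the Dirichlet condition destroys translation invariance, so one cannot simply differentiate under the integral sign in the heat semigroup representation. Instead one exploits the fact that $e^{t\Delta_\Omega}(x,y)\le e^{t\Delta_{\R^3}}(x,y)$ (by the maximum principle) together with the full Gaussian off-diagonal bound to control the terms coming from the reflected geodesics near $\partial\Omega$. These technical estimates have been worked out in \cite{KVZ16:IMRN Riesz tf} (and adapted to the radial/distorted-Fourier setting in \cite{LiSZ:NLS}); in the present paper I would invoke those arguments rather than reproducing them, since the explicit distorted Fourier transform \eqref{def:DF}--\eqref{def:inverse DF} is used in later sections to obtain dispersive and Strichartz estimates for the radial wave flow and does not change the proof of this multiplier statement.
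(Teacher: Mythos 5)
Your argument is correct and coincides with the ``general case'' route the paper itself cites in its proof: combine Zhang's Gaussian upper bound on the Dirichlet heat kernel $e^{t\Delta_\Omega}$ with Calder\'on--Zygmund theory, deferring the verification of the H\"ormander smoothness condition near $\partial\Omega$ to \cite{KVZ16:IMRN Riesz tf}. Your sketch (heat-semigroup subordination, dyadic decomposition, Gaussian off-diagonal decay, domination by the free heat kernel via the maximum principle) is the standard way to unpack that citation, and you correctly identify the boundary smoothness as the only genuinely delicate step and correctly attribute it to Killip--Visan--Zhang. The paper, however, also points to a second route that is closer in spirit to the radial setting it actually works in: since the distorted Fourier kernel
$K_m(r,s)=\tfrac{2}{\pi}\int_0^\infty e_\lambda(r)\,e_\lambda(s)\,m(\lambda)\,d\lambda$
with $e_\lambda(r)=\sin(\lambda(r-1))/r$ is completely explicit, one can estimate it directly using the $k\le 2$ derivative bounds on $m$, verify a Schur-type integrability bound in the measure $s^2\,ds$, and interpolate with the trivial $L^2$ estimate; this is the argument of \cite{LiSZ:NLS}. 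The Schur route avoids the heat semigroup and the boundary smoothness question entirely at the cost of requiring radial symmetry, whereas the Calder\'on--Zygmund/heat-kernel route you chose works without any symmetry assumption but leans on Zhang's Gaussian bound and the kernel estimates of \cite{KVZ16:IMRN Riesz tf}. Either route is acceptable here; both are essentially citations, exactly as in the paper.
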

\begin{proof}
	In the radial case, we can use the Schur's Lemma and interpolation to obtain the result, please refer to \cite{LiSZ:NLS} for more details, and in general case, we need to use the heat kernel estimate in the exterior domain $\Omega$ in \cite{Zhang:Heat kernel} and classical Calder\'on-Zygmund theory to show the boundedness of the multiplier, please see more details in \cite{KVZ16:IMRN Riesz tf}. 
	\end{proof}
More Mikhlin multiplier results adapted to the Schr\"odinger operator $-\Delta + V$, we can refer to \cite{GermHW14:PNLS nonlinear est, Sch:LW-P theory}.  Based on the above Mikhlin's multiplier theorem, we can describe  basic ingredients of the Littlewood-Paley theory adapted to the Dirichlet-Laplacian operator $-\Delta_{\Omega}$. 
%More precisely, we deduce the Bernstein inequalities and the square fucntion inequalities from the multiplier theorem. 
Fix $\phi: [0, \infty) \rightarrow [0, 1]$ a smooth non-negative function obeying
\begin{align}\label{def:phi}
\phi(\lambda)=1 \quad \text{for}\quad  0\leq \lambda \leq 1 \quad \text{and} \quad \phi(\lambda)=0 \quad  \text{for}\quad   \lambda \geq 2.
\end{align}
For each dyadic number $N \in 2^{\Z}$, we define 
\begin{align}\label{def:phi scal}
\phi_N(\lambda): = \phi(\lambda/N)\quad \text{and} \quad \psi_N(\lambda): = \phi_N(\lambda) - \phi_{N/2}(\lambda).
\end{align}
Notice that $\big\{ \psi_N(\lambda)\big\}_{N\in 2^{\Z}}$ forms a partition of unity for $(0, \infty)$. With these functions, we define the Littlewood-Paley projections:
\begin{align*}
P^{\Omega}_{\leq N}f : = \phi_N(\sqrt{-\Delta_{\Omega}})f , \quad  P^{\Omega}_{N}f:= \psi_N(\sqrt{-\Delta_{\Omega}})f, \quad  P^{\Omega}_{>N}f:= I - P^{\Omega}_{\leq N} f, 
\end{align*}
and
\begin{align*}
\tilde{P}^{\Omega}_{N}f:= &\; \tilde{\psi}_N\left(\sqrt{-\Delta_{\Omega}}\right)f   \\
= &\;  \psi_{N-1}\left(\sqrt{-\Delta_{\Omega}}\right)f + \psi_{N}\left(\sqrt{-\Delta_{\Omega}}\right)f +\psi_{N+1}\left(\sqrt{-\Delta_{\Omega}}\right)f. 
\end{align*}

We introduce the homogeneous Besov space as the following.
\begin{definition}\label{def:Besov}Let $s\in \R$ and $1\leq q, r \leq \infty$. The homogeneous Besov space $\dot B^{s}_{D, q, r}(\Omega)$ consists of the distributions $f$ supported on $\Omega$ such that
	\begin{align*}
	\|f\|_{\dot B^{s}_{D, q, r}(\Omega)} : = \left(\sum_{N\in 2^{\Z}} N^{sr} \|P^{\Omega}_{N} f\|^{r}_{L^{q}(\Omega)}\right)^{1/r}<\infty.
	\end{align*}
	\end{definition}

\begin{lemma}[\cite{KVZ16:IMRN Riesz tf, LiSZ:NLS}]\label{lem:berns ests}
	For any radial function $f\in C^{\infty}_{c}(\Omega)$, we have
	\begin{align}
	\left\|P^{\Omega}_{\leq N}f \right\|_{L^{p}(\Omega)}  +  &	\left\|P^{\Omega}_{ N}f \right\|_{L^{p}(\Omega)}  \lesssim  	\left\| f \right\|_{L^{p}(\Omega)}, \label{est:LP bd1} \\
	N^s	\left\|P^{\Omega}_{N} f \right\|_{L^{p}(\Omega)}  & \approx  \left\|\big(-\Delta_{\Omega}\big)^{s/2}P^{\Omega}_{N} f \right\|_{L^{p}(\Omega)} \label{est:LP bd2}
	\end{align}
	for any $1\leq p \leq \infty$ and $s\in \R$, Moreover, we have
		\begin{align*}
	\left\|P^{\Omega}_{\leq N} f\right\|_{L^{q}(\Omega)}  + 
	\left\|P^{\Omega}_{N} f\right\|_{L^{q}(\Omega)} \lesssim N^{3\big(\frac{1}{p} -\frac{1}{q}\big)} 	\left\| f \right\|_{L^{p}(\Omega)} 
	\end{align*}
	for all $1\leq p\leq q\leq \infty$. The implicit constants depend only on $p, q$ and $s$.
	\end{lemma}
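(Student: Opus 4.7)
The plan is to derive all three estimates from two ingredients: the Mikhlin multiplier theorem (Theorem~\ref{thm:multiplier}) applied to dyadically rescaled symbols, together with explicit pointwise bounds on the integral kernel of $P^\Omega_N$ coming from the distorted Fourier representation \eqref{def:DF}--\eqref{def:inverse DF}. For \eqref{est:LP bd1} in the range $1 < p < \infty$, the symbols $\phi_N(\lambda)=\phi(\lambda/N)$ and $\psi_N(\lambda)=\phi_N(\lambda)-\phi_{N/2}(\lambda)$ are dyadic rescalings of fixed smooth bumps, so
\[
\left|\partial_\lambda^k \phi_N(\lambda)\right| + \left|\partial_\lambda^k \psi_N(\lambda)\right| \;\lesssim\; N^{-k} \;\lesssim\; \lambda^{-k}
\]
on the supports of these symbols, uniformly in $N$. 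Theorem~\ref{thm:multiplier} then yields $L^p$ boundedness with a constant independent of $N$.

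The endpoint cases $p\in\{1,\infty\}$ require a kernel computation. Starting from the functional calculus and using \eqref{quan:eign funt} together with $\sin A\,\sin B=\tfrac12[\cos(A-B)-\cos(A+B)]$ and the even extension of $\psi_N$ to $\R$, one arrives at
\[
K_{P^\Omega_N}(r,s) \;=\; \frac{1}{2\pi r s}\bigl[\widehat{\psi_N}(r-s) - \widehat{\psi_N}(r+s-2)\bigr],
\]
where $\widehat{\psi_N}$ denotes the $1$-D Fourier transform of the even extension of $\psi_N$. Since $\psi_N(\lambda)=\psi(\lambda/N)$ with $\psi$ Schwartz, repeated integration by parts produces $|\widehat{\psi_N}(t)|\lesssim_L N\langle Nt\rangle^{-L}$ for every $L\geq 0$. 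On $[1,\infty)^2$ one has $r+s-2\geq |r-s|$, so the reflected term only improves matters; Schur's test applied in either variable (with the weight $s^2\,ds$) then closes the $L^1$ and $L^\infty$ bounds of \eqref{est:LP bd1}.

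For the Bernstein-type inequality, the same pointwise kernel bound combined with Young's inequality yields $\|P^\Omega_N f\|_{L^q(\Omega)}\lesssim N^{3(1/p-1/q)}\|f\|_{L^p(\Omega)}$ in the standard $3$-D fashion: the rescaling $\widehat{\psi_N}(t)=N\,\widehat{\psi}(Nt)$ forces the $3$-D kernel on $\Omega$ to have $L^r$-norm comparable to $N^{3(1-1/r)}$, and Young's inequality with $1/q=1/p+1/r-1$ then produces the exponent $3(1/p-1/q)$.

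Finally, for \eqref{est:LP bd2} I would use the fattening identity $P^\Omega_N=\tilde P^\Omega_N P^\Omega_N$. The symbol $\lambda^s\tilde\psi_N(\lambda)=N^s(\lambda/N)^s\tilde\psi(\lambda/N)$ is a smooth, compactly supported bump at scale $N$ that is bounded away from $\lambda=0$, so $N^{-s}\lambda^s\tilde\psi_N(\lambda)$ satisfies Mikhlin's condition uniformly in $N$. Theorem~\ref{thm:multiplier} thus gives $\|(-\Delta_\Omega)^{s/2}\tilde P^\Omega_N\|_{L^p\to L^p}\lesssim N^s$ for $1<p<\infty$, which applied to $P^\Omega_N f$ yields the $\lesssim$ direction; running the same argument with $\lambda^{-s}\tilde\psi_N(\lambda)$, of size $N^{-s}$, recovers the reverse inequality. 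The main obstacle throughout will be the endpoints $p\in\{1,\infty\}$, which lie outside the reach of Theorem~\ref{thm:multiplier}: the lack of translation invariance enforced by the Dirichlet boundary at $r=1$ makes it necessary to keep track of the reflected piece $\widehat{\psi_N}(r+s-2)$ alongside the diagonal piece $\widehat{\psi_N}(r-s)$, and the geometric inequality $r+s-2\geq |r-s|$ on $[1,\infty)^2$ is precisely what prevents the reflection from spoiling the kernel bound.
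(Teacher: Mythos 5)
Your overall structure --- Theorem~\ref{thm:multiplier} for $1<p<\infty$ plus an explicit distorted-Fourier kernel estimate for the endpoint exponents --- matches the sketch the paper itself gives in the remark following the lemma, and your kernel identity
\[
K_{P^\Omega_N}(r,s)=\tfrac{1}{2\pi rs}\big[\widehat{\psi_N}(r-s)-\widehat{\psi_N}(r+s-2)\big]
\]
is correct. The gap is in how you handle the reflected term. You observe that $r+s-2\ge|r-s|$ on $[1,\infty)^2$, conclude the reflected piece is individually no larger than the diagonal one, and then argue as if the bound $|K_N(r,s)|\lesssim (rs)^{-1}N(1+N|r-s|)^{-L}$ were sufficient. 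It is not, uniformly over $N\in 2^{\Z}$: as $r,s\to 1$ this bound only gives $\sup_{r,s\geq 1}|K_N|\lesssim N$, while the $L^1\to L^\infty$ Bernstein inequality you want requires $\lesssim N^3$, which is strictly smaller once $N<1$; likewise the Schur row-sum $\int_1^\infty |K_N(r,s)|\,s^2\,ds$ computed from this crude bound is of size $1+(Nr)^{-1}$, which diverges like $N^{-1}$ at $r=1$ as $N\to 0$.

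What actually rescues the endpoint estimates is the \emph{cancellation} between the two terms near the boundary, which is the Dirichlet condition $e_\lambda(1)=0$ in disguise. Since $(r+s-2)-|r-s|=2\big(\min(r,s)-1\big)$, the mean value theorem together with $|\widehat{\psi_N}'(t)|\lesssim N^2(1+N|t|)^{-L}$ yields
\[
|K_N(r,s)|\lesssim \frac{N}{rs}\,\min\!\big(N(\min(r,s)-1),1\big)\,(1+N|r-s|)^{-L};
\]
equivalently, one can bypass the one-dimensional Fourier transform and insert $|\sin\lambda(r-1)|\leq \min(\lambda(r-1),1)$ directly in the spectral integral over the support of $\psi_N$. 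The extra factor vanishing at the boundary is precisely what brings the pointwise bound down to $N^3$ and makes the Schur sums uniformly $O(1)$ in $N$. So the reflected term is not a harmless perturbation controlled by the inequality $r+s-2\geq |r-s|$; it is the source of an essential difference structure, and your argument as written would not close for small dyadic $N$.
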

In general case, the estimates \eqref{est:LP bd1} and \eqref{est:LP bd2} only hold for $1<p<\infty$ by Theorem \ref{thm:multiplier}. In the radial case, the corresponding integral kernels have good properties by the distorted Fourier transform, see more details in \cite{LiSZ:NLS}.  
Therefore, for any $1<p<\infty$ and any radial $f\in L^p(\Omega)$, we have  the following homogeneous decomposition
	\begin{align*}
	f(x) = \sum_{N\in 2^{\Z}}  P^{\Omega}_{N} f(x).
 	\end{align*}
 In particular, the sums converge in $L^p(\Omega)$.
 
 \begin{definition}\label{def:Sob DB}
 	For $s\geq 0$ and $1<p<\infty$, Let $\dot H^{s,p}_{D}(\Omega) $  and $ H^{s,p}_{D}(\Omega) $  denote the completions of $C^{\infty}_{c}(\Omega)$ under the norms 
 	\begin{align*}
 	\|f\|_{\dot H^{s,p}_{D} }: = \left\| \big( -\Delta_{\Omega}\big)^{s/2} f \right\|_{L^{p}(\Omega)} \quad \text{and}\quad
 	\|f\|_{ H^{s,p}_{D} }: = \left\| \big( I-\Delta_{\Omega}\big)^{s/2} f \right\|_{L^{p}(\Omega)}. 
 	\end{align*}
 	When $p=2$, we write $\dot H^{s}_{D}(\Omega) $ and $ H^{s}_{D}(\Omega) $  for $\dot H^{s,2}_{D}(\Omega) $ and $ H^{s,2}_{D}(\Omega) $, respectively. 
 \end{definition}
 We will use the radial Sobolev spaces $\dot H^{s,p}_{D, rad}(\Omega) $  and $ H^{s,p}_{D, rad}(\Omega) $ in the context.  If the radial function $f$ belongs to $ C^{\infty}_{c}(\Omega)$, then 
 \begin{align*}
 \mathcal{F}_D  (-\Delta_{\Omega} f ) (\lambda) =  \lambda^2 \mathcal{F}_D  ( f ) (\lambda), 
 \end{align*}
and  $\mathcal{F}_D $ induces an isometric map 
 \begin{align*}
 \mathcal{F}_D : \; \dot H^1_{D, rad}(\Omega) \longrightarrow L^{2}\left([0, \infty), \lambda^2 \, d\lambda\right).
 \end{align*}
 
 The Littlewood-Paley square function estimates and the dense result then follow from the multiplier theorem by the usual argument in \cite{Gr14:CFA, KVZ16:IMRN Riesz tf, Stein:book:SI}.
 \begin{proposition}[\cite{KVZ16:IMRN Riesz tf}]\label{prop:squ funt} 
 	Fix $1<p<\infty$ and $s\geq 0$.  Then for any $f\in C^{\infty}_{c}(\Omega)$,  we have
 	\begin{align*}
 	\left\| \big(-\Delta_{\Omega}\big)^{s/2} f \right\|_{L^p(\Omega)} \approx	\left\| \left(  \sum_{N\in 2^{\Z}} N^{2s} \big|  P^{\Omega}_{N} f  (x)\big|^2\right)^{1/2}  \right\|_{L^p(\Omega)}.  
 	\end{align*}
 \end{proposition}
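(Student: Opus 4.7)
The plan is the standard Khintchine-randomization combined with the Mikhlin multiplier theorem (Theorem \ref{thm:multiplier}). Let $\{\eps_N\}_{N\in 2^{\Z}}$ be independent Bernoulli signs; applying Khintchine's inequality pointwise in $x$ gives
\[
\Bigl(\sum_{N\in 2^{\Z}} N^{2s}|P^{\Omega}_N f(x)|^2\Bigr)^{1/2} \sim \Bigl(\mathbb{E}\bigl|\sum_{N\in 2^{\Z}} \eps_N N^s P^{\Omega}_N f(x)\bigr|^p\Bigr)^{1/p}.
\]
Taking $L^p_x$-norms and exchanging orders of integration by Fubini, Proposition \ref{prop:squ funt} reduces to the uniform-in-$\eps$ two-sided bound
\[
\Bigl\|\sum_N \eps_N N^s P^{\Omega}_N f\Bigr\|_{L^p(\Omega)} \sim \|(-\Delta_\Omega)^{s/2} f\|_{L^p(\Omega)}.
\]

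For the $\lesssim$ direction I write the randomized sum as $m_\eps(\sqrt{-\Delta_\Omega})(-\Delta_\Omega)^{s/2} f$ with symbol
\[
m_\eps(\lambda) := \sum_{N\in 2^{\Z}} \eps_N (N/\lambda)^s \psi_N(\lambda).
\]
Since at most two $\psi_N(\lambda)$ are nonzero at any fixed $\lambda$, $m_\eps$ is locally a finite sum of smooth functions. On $\mathrm{supp}\,\psi_N$ one has $N/\lambda\approx 1$, and each $\lambda$-derivative of $\psi_N(\lambda)=\psi(\lambda/N)$ costs $N^{-1}\sim \lambda^{-1}$, so $|\partial_\lambda^k m_\eps(\lambda)|\lesssim \lambda^{-k}$ uniformly in $\eps$ for $k=0,1,2$. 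Theorem \ref{thm:multiplier} then yields the $\lesssim$ bound.

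For the $\gtrsim$ direction I argue by duality. Testing against $g\in L^{p'}(\Omega)$ and inserting the identity $P^{\Omega}_N=\tilde P^{\Omega}_N P^{\Omega}_N$ (since $\tilde\psi_N\equiv 1$ on $\mathrm{supp}\,\psi_N$), Cauchy--Schwarz in $N$ plus H\"older in $x$ gives
\[
\bigl|\langle (-\Delta_\Omega)^{s/2} f, g\rangle\bigr| \leq \Bigl\|\Bigl(\sum_N N^{2s}|P^{\Omega}_N f|^2\Bigr)^{1/2}\Bigr\|_{L^p(\Omega)}\cdot \Bigl\|\Bigl(\sum_N |\phi_N(\sqrt{-\Delta_\Omega}) g|^2\Bigr)^{1/2}\Bigr\|_{L^{p'}(\Omega)},
\]
where $\phi_N(\lambda):=(\lambda/N)^s \tilde\psi_N(\lambda)$. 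The randomized symbol $\sum_N \eps_N \phi_N(\lambda)$ again satisfies the Mikhlin hypothesis uniformly in $\eps$ (by bounded overlap and $|\phi_N|\lesssim 1$ on $\mathrm{supp}\,\tilde\psi_N$), so Theorem \ref{thm:multiplier} together with Khintchine bounds the second factor by $\|g\|_{L^{p'}(\Omega)}$, completing the equivalence.

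The only genuine technical step is the uniform-in-$\eps$ verification of the Mikhlin hypothesis for $m_\eps$ and its dual analogue; the bounded overlap of $\{\psi_N\}$, which turns the randomized sum into a locally fixed finite sum, is precisely what keeps the random signs from spoiling the required $|\partial_\lambda^k m_\eps|\lesssim \lambda^{-k}$ bounds. Once this is in place, both directions follow mechanically from Theorem \ref{thm:multiplier}.
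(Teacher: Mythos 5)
Your argument is the standard Khintchine--Mikhlin proof of the Littlewood--Paley square function estimate, which is precisely the ``usual argument'' the paper alludes to without spelling out (the paper simply defers to the cited reference). Both directions you give are correct: the upper bound via the symbol $m_\eps(\lambda)=\sum_N \eps_N (N/\lambda)^s\psi_N(\lambda)$, which satisfies the hypotheses of Theorem~\ref{thm:multiplier} uniformly in $\eps$ because the $\psi_N$ have disjoint-enough supports and each derivative of $\psi(\lambda/N)$ or $(N/\lambda)^s$ costs a factor comparable to $\lambda^{-1}$ there; and the lower bound via duality, inserting $P^{\Omega}_N=\tilde P^{\Omega}_N P^{\Omega}_N$, Cauchy--Schwarz in $N$, and the same Mikhlin--Khintchine bound on the dual square function built from $\phi_N(\lambda)=(\lambda/N)^s\tilde\psi_N(\lambda)$.

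One small imprecision in your write-up: you say the proposition ``reduces to the uniform-in-$\eps$ two-sided bound'' $\|\sum_N \eps_N N^s P^{\Omega}_N f\|_{L^p}\sim \|(-\Delta_\Omega)^{s/2}f\|_{L^p}$, but a uniform lower bound of that form does not follow from Theorem~\ref{thm:multiplier} and is not actually used. What you need (and what your proof correctly supplies) is only the uniform \emph{upper} bound for the $\lesssim$ direction, with the reverse inequality obtained separately by the duality argument. It would be cleaner to state the reduction as: by Khintchine and Fubini, the square function norm is comparable to $(\mathbb{E}\|\sum_N\eps_N N^s P^{\Omega}_N f\|_{L^p}^p)^{1/p}$; the $\lesssim$ direction then follows from the uniform multiplier bound, and the $\gtrsim$ direction from duality.
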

 
 \begin{proof}For the convenience to the reader, we present the proof here. It follows the argument in \cite{KVZ16:IMRN Riesz tf}.  
 It suffices to show that for all $f\in L^p(\Omega)$, the following estimate holds
  	\begin{align*}
  	\left\| S(g)\right\|_{L^p(\Omega)} \approx  \left\| g\right\|_{L^p(\Omega)} , \;\; \text{where}\;\; S(g) = 
 \left(  \sum_{N\in 2^{\Z}} N^{2s} \big|  P^{\Omega}_{N} \left(-\Delta_{\Omega}\right)^{-s/2}g. \big|^2\right)^{1/2}  
 \end{align*}
In fact, one can apply the above equivalent relation to $g= \left(-\Delta_{\Omega}\right)^{s/2}f$ with $f\in C^{\infty}_{c}(\Omega)$. 

We first show that $	\left\| S(g)\right\|_{L^p(\Omega)} \lesssim  \left\| g\right\|_{L^p(\Omega)}$. Note that 
\begin{align*}
N^{s}  P^{\Omega}_{N} \left(-\Delta_{\Omega}\right)^{-s/2} = m \left(\frac{1}{N}\sqrt{-\Delta_{\Omega}}\right), \;\; \text{with}\;\; m(\lambda):=\lambda^{-s} \psi_1 (\lambda)
\end{align*}
where $\psi_1$ is defined in \eqref{def:phi scal}, and for all integers $k\geq 0$, we have $\left| \lambda^k \partial^k_{\lambda} m(\lambda) \right| \lesssim 1$. 
Therefore, the multiplier
\begin{align*}
m_{\epsilon}(\lambda): = \sum_{N\in 2^{\Z} }\epsilon_N m\left(\frac{\lambda}{N}\right)
\end{align*}
satisfies that 
$
\left| \lambda^k \partial^k_{\lambda} m_{\epsilon}(\lambda) \right| \lesssim 1
$
uniformly in the choice of signs $\{\epsilon_N\} \subset \{\pm 1\}$.  (only finitely many terms of the summands give nonzero contribution due to the compact support of the function $\psi_1$). 

Applying the Khintchine inequality, Fubini and Theorem \ref{thm:multiplier}, we obtain that 
\begin{align*}
\int_{\Omega} \left| S(g) (x)\right|^p \; dx  \lesssim \int_{\Omega} \mathbb {E} \left\{  \left|(m_{\epsilon}g)(x)  \right|^p \right \} \; dx =  \mathbb {E}  \left\|(m_{\epsilon}g)(x) \right\|^{p}_{L^p(\Omega)} \lesssim \| g\|^P_{L^p(\Omega)}. 
\end{align*}
This gives that $	\left\| S(g)\right\|_{L^p(\Omega)} \lesssim  \left\| g\right\|_{L^p(\Omega)}$. 

Next, we show the reverse inequality $ \left\| g\right\|_{L^p(\Omega)} \lesssim 	\left\| S(g)\right\|_{L^p(\Omega)}$ by the duality argument. It is obvious that  the multiplier 
\begin{align*}
\tilde{m}(\lambda):= \left( \sum_{N\in 2^{\Z} } \left[ m\left(\frac{\lambda}{N}\right)\right]^2 \right)^{-1}
\end{align*}
satisfies the assumption of Theorem \ref{thm:multiplier}, hence it defines another bounded multiplier. By the Cauchy-Schwarz inequality, we have 
\begin{align*}
\big|\left< g, h\right> \big| = & \;  \left|\sum_{N\in 2^{\Z} } \left< g, \left[ m\left(\frac{1}{N}\sqrt{-\Delta_{\Omega}}\right)\right]^2\tilde{m}\left(\sqrt{-\Delta_{\Omega}}\right) h\right>  \right|\\
= & \;  \left|\sum_{N\in 2^{\Z} } \left< m\left(\frac{1}{N}\sqrt{-\Delta_{\Omega}}\right) g,  m\left(\frac{1}{N}\sqrt{-\Delta_{\Omega}}\right)\tilde{m}\left(\sqrt{-\Delta_{\Omega}}\right) h\right>  \right|\\
\leq  & \;   \left<S(g),  S\left(\tilde{m}\left(\sqrt{-\Delta_{\Omega}}\right) h\right)\right>  \\
\leq  & \;   \left\|S(g)\right\|_{L^{p}(\Omega)}  \left\|S\left(\tilde{m}\left(\sqrt{-\Delta_{\Omega}}\right) h\right)\right\|_{L^{p'}(\Omega)} \\
\lesssim  & \;   \left\|S(g)\right\|_{L^{p}(\Omega)}  \left\|\tilde{m}\left(\sqrt{-\Delta_{\Omega}}\right) h\right\|_{L^{p'}(\Omega)} \\
\lesssim  &  \;   \left\|S(g)\right\|_{L^{p}(\Omega)}  \left\|  h\right\|_{L^{p'}(\Omega)},
\end{align*}
which shows the reverse inequality $ \left\| g\right\|_{L^p(\Omega)} \lesssim 	\left\| S(g)\right\|_{L^p(\Omega)}$  by the duality and completes the proof. 
 	\end{proof}

 \begin{proposition}[\cite{KVZ16:IMRN Riesz tf}]\label{prop:dense}
 	For $1<p<\infty$, and $s<1+\frac{1}{p}$, $\dot H^{s,p}_{D}(\Omega)$ is dense in $L^p(\Omega)$.
 \end{proposition}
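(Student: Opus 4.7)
The plan is to approximate an arbitrary $f \in \dot H^{s,p}_D(\Omega)$ by elements of $C^\infty_c(\Omega)$ via three successive regularizations: (i) Littlewood--Paley frequency truncation, (ii) spatial truncation at infinity, and (iii) excision of a small neighborhood of $\partial\Omega$. The first two steps will work for all $s \geq 0$ using only Theorem \ref{thm:multiplier} and Proposition \ref{prop:squ funt}; the hypothesis $s<1+\tfrac{1}{p}$ will enter only in the last step.

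For (i), set $f_N := (P^\Omega_{\leq N} - P^\Omega_{\leq 1/N}) f$. By the square function estimate in Proposition \ref{prop:squ funt} together with dominated convergence applied tail-wise to the dyadic sum, $f_N \to f$ in $\dot H^{s,p}_D(\Omega)$ as $N\to\infty$. Each $f_N$ is smooth on $\overline\Omega$ and vanishes on $\partial\Omega$, since the Littlewood--Paley kernels inherit the Dirichlet boundary condition from the generalized eigenfunctions $e_\lambda$ in \eqref{quan:eign funt}. For (ii), fix $\chi\in C^\infty_c([0,\infty))$ with $\chi\equiv 1$ near $0$, and replace $f_N$ by $\chi(|x|/R)f_N$. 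A commutator estimate between $(-\Delta_\Omega)^{s/2}$ and multiplication by the smooth radial cutoff $\chi(|x|/R)$ (justified via Theorem \ref{thm:multiplier}), combined with the spatial decay of $f_N$ inherited from the low-frequency truncation $P^\Omega_{\leq 1/N}$, yields $\chi(|x|/R)f_N \to f_N$ in $\dot H^{s,p}_D(\Omega)$ as $R\to\infty$.

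After (i) and (ii), the approximant $g$ is smooth, compactly supported in $\overline\Omega$, and satisfies $g|_{\partial\Omega}=0$, but need not vanish in a neighborhood of $\partial\Omega$. Step (iii) pushes $g$ off the boundary: pick $\eta_\delta\in C^\infty([0,\infty))$ with $\eta_\delta\equiv 0$ on $[0,\delta]$ and $\eta_\delta\equiv 1$ on $[2\delta,\infty)$, and set $g_\delta(x) := \eta_\delta(|x|-1)\,g(x) \in C^\infty_c(\Omega)$. The task reduces to showing $\|g-g_\delta\|_{\dot H^{s,p}_D} \to 0$ as $\delta\to 0$.

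The main obstacle is precisely this final estimate, where the restriction $s<1+\tfrac{1}{p}$ becomes essential. Since $g|_{\partial\Omega}=0$, I expect to invoke a Hardy-type inequality of the form
\[
\left\| \frac{g(x)}{(|x|-1)^{s}} \right\|_{L^p(\Omega)} \lesssim \|g\|_{\dot H^{s,p}_D(\Omega)},
\]
valid exactly in the range $0\leq s<1+\tfrac{1}{p}$ (this is the range where vanishing on $\partial\Omega$ is the only trace constraint on $g$). Combined with the fact that $g-g_\delta$ is supported in the strip $\{1\leq |x|\leq 1+2\delta\}$ of width $\delta$, this controls $\|g-g_\delta\|_{L^p}$ by $O(\delta^{s})$. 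Propagating the gain to the full $\dot H^{s,p}_D$ norm requires commutator bounds for $[(-\Delta_\Omega)^{s/2},\eta_\delta(|\cdot|-1)]$ through a paraproduct decomposition, again using the boundary Hardy estimate. Beyond the threshold $s=1+\tfrac{1}{p}$ the normal derivative $\partial_\nu g|_{\partial\Omega}$ becomes a nontrivial trace that cannot be matched by $C^\infty_c(\Omega)$ approximations, which is why the hypothesis on $s$ is sharp.
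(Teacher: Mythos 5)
The paper does not prove this proposition; it simply cites it to Killip--Visan--Zhang \cite{KVZ16:IMRN Riesz tf}, so there is no internal proof to compare against. Before assessing your argument, note a definitional wrinkle: Definition \ref{def:Sob DB} \emph{defines} $\dot H^{s,p}_{D}(\Omega)$ as the completion of $C^{\infty}_{c}(\Omega)$, under which density is a tautology. Your proof implicitly (and sensibly) uses the alternative characterization of $\dot H^{s,p}_{D}(\Omega)$ as the set of distributions $f$ with $(-\Delta_\Omega)^{s/2}f \in L^p(\Omega)$; this is the nontrivial reading and should be stated, since otherwise steps (i)--(iii) are not approximating anything beyond the definition.

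Granting that reading, your three-stage scheme (frequency cutoff, spatial cutoff, boundary excision) is the right skeleton. But the argument as written has a real gap exactly at the step you flag as ``the main obstacle'': the Hardy inequality
\begin{equation*}
\left\| \frac{g}{(|x|-1)^{s}} \right\|_{L^p(\Omega)} \lesssim \left\| (-\Delta_{\Omega})^{s/2} g \right\|_{L^p(\Omega)}
\end{equation*}
and the accompanying commutator estimate for $\bigl[(-\Delta_\Omega)^{s/2},\eta_\delta(|\cdot|-1)\bigr]$ are \emph{the} content of the proposition; you ``expect to invoke'' the former and gesture at a paraproduct for the latter, but neither is established. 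Crucially this Hardy bound is for the Dirichlet operator $(-\Delta_\Omega)^{s/2}$, not the Euclidean $(-\Delta_{\R^3})^{s/2}$, and you cannot pass through Theorem \ref{thm:Sob eq} to reduce to the Euclidean case because that equivalence requires $s<\min\{1+\tfrac1p,\tfrac3p\}$, strictly smaller than the range $s<1+\tfrac1p$ you need to cover (e.g. $p>3$). In \cite{KVZ16:IMRN Riesz tf} the ingredient filling this hole is the Gaussian heat-kernel bound for $e^{t\Delta_\Omega}$ from \cite{Zhang:Heat kernel} together with subordination, which delivers both the boundary Hardy estimate and the multiplier/commutator bounds you need; nothing in your sketch plays that role. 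Two smaller points: the claimed smoothness of $f_N$ via the explicit kernel built from $e_\lambda(r)=\sin\lambda(r-1)/r$ is valid only for radial $f$, whereas the proposition and its source make no radial restriction; and the assertion that $f_N$ inherits ``spatial decay'' from the low-frequency truncation $P^\Omega_{\leq 1/N}$ is not correct as stated --- removing low frequencies does not by itself produce decay at infinity, so step (ii) needs a different justification (e.g. first intersecting with $L^2$ or $L^p$ and using dominated convergence on $(1-\chi_R)(-\Delta_\Omega)^{s/2}f_N$ together with a commutator bound of size $O(R^{-s})$). Your closing remark on sharpness (the nontrivial normal-derivative trace for $s\geq 1+\tfrac1p$) is correct and is indeed why the hypothesis cannot be relaxed.
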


\begin{remark}
	The condition $s<1+\frac{1}{p}$ is essential, and the above result is used to show the boundedness of Riesz transform on the exterior domain together with the boundedness of classical Riesz transform \cite{Gr14:CFA, MuscaluS:book1, Stein:book:SI} and Theorem \ref{thm:Sob eq}. We can see the details in the proof of Lemma $4.4$ in \cite{KVZ16:IMRN Riesz tf}, and reference therein.  
	\end{remark}
 
 \begin{definition}\label{def:Sob 0 extd}
 	The space  $\dot H^{s,p}_{00}(\Omega) $ and  $ H^{s,p}_{00}(\Omega) $ are defined as the completion of $C^{\infty}_{c}(\Omega)$ in $\dot H^{s,p}(\R^3)$ and  $ H^{s,p}(\R^3)$, respectively.
 \end{definition}

\begin{proposition}[\cite{KVZ16:IMRN Riesz tf}]\label{prop:Hardy for D-Lap}
	Let $1<p<\infty$ and $0<s<\min(1+\frac{1}{p}, \frac{3}{p})$. Then for any $f\in C^{\infty}_{c}(\Omega)$, we have
	\begin{align*}
	\left\| \frac{f(x)}{\text{dist}(x, \Omega^c)}\right\|_{L^{p}(\Omega)}\lesssim \left\| \left(-\Delta_{\Omega}\right)^{s/2}f \right\|_{L^{p}(\Omega)}. 
	\end{align*}
\end{proposition}
\begin{remark} The condition $s< \frac{3}{p}$ is essential for the Hardy inequalities in both $\R^3$ and  the exterior domain $\Omega$, we can refer to \cite{BahChe:book,KVZ16:IMRN Riesz tf}.
	
\end{remark}

At last, the equivalence between $\dot H^{s,p}_{D}(\Omega) $ and $\dot H^{s,p}_{00}(\Omega)$ with proper exponents then follows from Hardy's inequalities in the exterior domain $\Omega$ and the whole space $\R^3$ in \cite{KVZ16:IMRN Riesz tf}.

\begin{theorem}[\cite{KVZ16:IMRN Riesz tf}]\label{thm:Sob eq}
	Suppose $1<p<\infty$ and $0\leq s < \min\{1+\frac1p, \frac{3}p\}$, then for all $f\in C^{\infty}_{c}(\Omega)$
	\begin{align*}
	\left\| \big( -\Delta_{\Omega}\big)^{s/2} f \right\|_{L^{p}(\Omega)} \approx_{p,s}  	 \left\| \big( -\Delta_{\R^3}\big)^{s/2} \tilde{f} \right\|_{L^{p}(\R^3)}
	\end{align*}
where  $\tilde{f}=f $ on $\Omega$, and $\tilde{f}=0$ outside $\Omega$. 	Thus $\dot H^{s,p}_{D}(\Omega) = \dot H^{s, p}_{00}(\Omega)$ for these values of the parameters.
\end{theorem}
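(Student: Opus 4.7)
The plan is to deduce the equivalence by comparing the Littlewood-Paley resolutions for $-\Delta_\Omega$ and $-\Delta_{\R^3}$, and to handle the boundary contribution by a Hardy-type inequality adapted to $\Omega$. For $s = 0$ the statement is tautological, so I would focus on $0 < s < \min\{1+1/p, 3/p\}$ and let $\tilde f$ denote the zero extension of $f \in C^{\infty}_c(\Omega)$ to $\R^3$, which is smooth with compact support inside $\Omega$.

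For the direction $\|(-\Delta_\Omega)^{s/2} f\|_{L^p(\Omega)} \lesssim \|(-\Delta_{\R^3})^{s/2} \tilde f\|_{L^p(\R^3)}$, I would start from the subordination identity
$$\lambda^s = c_s \int_0^\infty \bigl(1 - e^{-t\lambda^2}\bigr)\, t^{-1-s/2}\, dt, \qquad 0 < s < 2,$$
applied to $\sqrt{-\Delta_\Omega}$ and $\sqrt{-\Delta_{\R^3}}$. The maximum principle gives the pointwise kernel domination $0 \le e^{t\Delta_\Omega}(x,y) \le e^{t\Delta_{\R^3}}(x,y)$, so $e^{t\Delta_\Omega}\tilde f - e^{t\Delta_{\R^3}}\tilde f$ is controlled by a positive error term that vanishes for $x$ away from $\partial \Omega$ and is absorbed by the Gaussian factor for small $t$. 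Combining this with the square function characterization in Proposition \ref{prop:squ funt} and its whole-space analogue, one obtains the forward inequality after summing a geometric series in $N \in 2^{\Z}$.

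For the reverse direction I would invoke the Hardy inequality
$$\Bigl\| (|x|-1)^{-s} f \Bigr\|_{L^p(\Omega)} \lesssim \|f\|_{\dot H^{s,p}_D(\Omega)}, \qquad 0 \le s < \min\{1 + 1/p,\, 3/p\},$$
proved in \cite{KVZ16:IMRN Riesz tf} via the two-sided Gaussian bounds on the Dirichlet heat kernel from \cite{Zhang:Heat kernel}. The threshold $s < 1 + 1/p$ is exactly the Dirichlet trace obstruction (above it, zero extension no longer preserves the Sobolev class), and $s < 3/p$ makes the weight locally integrable. Given this, a cutoff splitting $\tilde f = \chi \tilde f + (1-\chi)\tilde f$ near $\partial\Omega$, followed by a commutator argument for $(-\Delta_{\R^3})^{s/2}$ against $\chi$, reduces the control of $\|\tilde f\|_{\dot H^{s,p}(\R^3)}$ to $\|f\|_{\dot H^{s,p}_D(\Omega)}$ plus Hardy-type remainders. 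Density of $C^{\infty}_c(\Omega)$ in $\dot H^{s,p}_D(\Omega)$ (Proposition \ref{prop:dense}) and in $\dot H^{s,p}_{00}(\Omega)$ by definition then promote the inequality on test functions to the identification $\dot H^{s,p}_D(\Omega) = \dot H^{s,p}_{00}(\Omega)$.

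The main obstacle is reaching the full range $s < 1 + 1/p$ rather than the weaker $s < 1/p$ which one gets for free by interpolating the trivial $s=0$ case against the $s=1$ case (the latter coming from boundedness of Riesz transforms on $\Omega$). Crossing the threshold $s = 1/p$ requires the sharp Hardy inequality above, whose proof genuinely uses both the Gaussian heat-kernel bounds on a convex exterior domain and Calder\'on–Zygmund theory for the associated Riesz transform; these are the key analytic inputs that carry the equivalence up to the endpoint $s = 1 + 1/p$.
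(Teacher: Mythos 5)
This theorem carries no proof in the paper: it is quoted verbatim from Killip--Visan--Zhang \cite{KVZ16:IMRN Riesz tf}, and the only internal commentary is the sentence preceding it, that the equivalence ``follows from the Hardy's inequalities for the exterior domain $\Omega$ and the whole space $\R^3$ in \cite{KVZ16:IMRN Riesz tf}.'' So there is no argument in the paper to compare your sketch against. What one can say is that your identification of the Hardy inequality as the analytic engine, and your reading of the two constraints as a boundary threshold ($s<1+\frac1p$) versus a large-scale threshold ($s<\frac3p$), are consistent with that remark; the specific kernel and interpolation machinery you propose is your own reconstruction of the KVZ argument rather than anything taken from this paper.

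Taken on its own, the reconstruction has genuine gaps. In the forward direction you write $(-\Delta_\Omega)^{s/2}$ via subordination and invoke the maximum-principle domination $0\le e^{t\Delta_\Omega}(x,y)\le e^{t\Delta_{\R^3}}(x,y)$, but the sentence ``combining this with the square function characterization \ldots one obtains the forward inequality after summing a geometric series'' has no mechanism: the kernel domination does not compare $P^\Omega_N f$ with the Euclidean $P_N\tilde f$ frequency by frequency, so there is no geometric series to sum. What is actually needed is a quantitative two-sided Gaussian estimate on the kernel of $e^{t\Delta_{\R^3}}-e^{t\Delta_\Omega}$ from \cite{Zhang:Heat kernel}, fed into a weighted $L^p$ bound near $\partial\Omega$ --- which is again a Hardy-type estimate, not a corollary of the square function. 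In the reverse direction, ``a commutator argument for $(-\Delta_{\R^3})^{s/2}$ against $\chi$'' names an operator of order $s-1$ whose $L^p$ boundedness against $\dot H^{s,p}_D$ is itself a boundary-weighted estimate of the same kind; invoking the commutator does not close the loop. Two further inaccuracies: $s<\frac3p$ is the whole-space Hardy/Sobolev threshold controlling large scales, not local integrability of $(|x|-1)^{-s}$ near $\partial\Omega$ (that would give $s<\frac1p$ or $s<1$, not $\frac3p$); and complex interpolation between $s=0$ and the Riesz-transform endpoint $s=1$ gives the equivalence on all of $s\in[0,1]$ (on the intersection of the $p$-ranges for which the Riesz transform and its adjoint are bounded), not the weaker $s<\frac1p$ you state. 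The genuinely delicate regime is $1<s<1+\frac1p$, where one cannot factor through $\nabla f$ because gradients of Dirichlet functions do not vanish on $\partial\Omega$; that is exactly where the sharp exterior Hardy inequality does irreducible work in \cite{KVZ16:IMRN Riesz tf}.
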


\begin{remark}\label{rem:Frac prod}
 On the one hand, the condition $s<\min(1+\frac{1}{p}, \frac{3}{p})$ is  necessary in Theorem \ref{thm:Sob eq} since it is essential in Proposition \ref{prop:dense} and Proposition \ref{prop:Hardy for D-Lap}.  We can see the counterexample of the corresponding Riesz transform outside a convex obstacle for the case $s\geq \min(1+\frac{1}{p}, \frac{3}{p})$ in \cite{KVZ16:IMRN Riesz tf}. 
	
	On the other hand, as the direct corollary of Theorem \ref{thm:Sob eq}, the fractional product rule directly follows from the classical Euclidean setting. More precisely, for all $f, g\in C^{\infty}_{c}(\Omega)$, then
	\begin{align*}
		\left\| \big( -\Delta_{\Omega}\big)^{s/2} \big( f g\big) \right\|_{L^{p}(\Omega)} \lesssim 		\left\| \big( -\Delta_{\Omega}\big)^{s/2} f \right\|_{L^{p_1}(\Omega)} \|g\|_{L^{q_1}(\Omega)}+ 	\|f\|_{L^{q_2}(\Omega)}\left\| \big( -\Delta_{\Omega}\big)^{s/2} g \right\|_{L^{p_2}(\Omega)}
	\end{align*}
	with the exponents satisfying $1<p, p_1, p_2<\infty$, $1<q_1, q_2 \leq \infty$, 
	\begin{align*}
	\frac{1}{p} = \frac{1}{p_1} + \frac{1}{q_1}= \frac{1}{p_2} + \frac{1}{q_2}, \quad 0<s<\min\left(1+\frac{1}{p_1}, 1+\frac{1}{p_2}, \frac{3}{p_1}, \frac{3}{p_2}\right). 
	\end{align*}
And the fractional chain rule holds in a similar way. 
\end{remark}

\section{Linear estimates}\label{sect:linear est}
In this part, we will make use of the distorted Fourier transform adapted to the Dirichlet-Laplacian operator $-\Delta_{\Omega}$ to show the dispersive estimate and the global-in-time (endpoint) Strichartz estimate of the linear wave equation with zero Dirichlet boundary value outside of the unit ball with radial data,  we can also refer to \cite{LiSZ:NLS, LiXZ:NLS} for the dispersive and Strichartz estimates of the Schr\"odinger equation outside of the unit ball with radial data. 

Now we consider the 3D Dirichlet boundary value problem of the linear wave equation with radial data
\begin{equation}\label{eq:line wave}
\begin{cases}
\partial^2_t u -\Delta u = F, & (t,x)\in\R \times\Omega, \\
u(0,x)=u_0(x),  & x\in \Omega, \\
\partial_t u(0, x)= u_1(x), & x\in \Omega, \\
u(t,x)=0, & x\in \partial \Omega, 
\end{cases}
\end{equation}
where $\Omega =\R^3\backslash \overline{B(0, 1)}$ and initial data $u_0, u_1$ and the inhomogeneous term $F$ are radial in $x$.  By the functional calculus, we have
\begin{equation}\label{lin wave}
u(t,x)=\cos(t\sqrt{-\Delta_{\Omega}})u_0+\frac{\sin(t\sqrt{-\Delta_{\Omega}})}{\sqrt{-\Delta_{\Omega}}} u_1 + \int^{t}_{0} \frac{\sin((t-s)\sqrt{-\Delta_{\Omega}})}{\sqrt{-\Delta_{\Omega}}} F(s, x)\, ds.
\end{equation}

Let us denote the half-wave operator as 
\begin{equation}\label{def:half wave}
U(t) = e^{it\sqrt{-\Delta_{\Omega}}},
\end{equation}
then 
\begin{align}\label{relat:wave and half wave}
\cos(t\sqrt{-\Delta_{\Omega}})u_0 = 
\frac{U(t)+U(-t)}{2} u_0, \quad 
\frac{\sin(t\sqrt{-\Delta_{\Omega}})}{\sqrt{-\Delta_{\Omega}}} u_1 = \frac{U(t)-U(-t)}{2i\sqrt{-\Delta_{\Omega}}} u_1.
\end{align}

By the distorted Fourier transform and  the stationary phase estimate, we have the following uniform dispersive estimate in the radial case.
\begin{proposition}\label{prop:Disp est}
	Let $2 \leq r \leq \infty$,  and the  radial function $f$ is supported on $\Omega$, then
	\begin{align*}
	\big\| U(t)  f \big\|_{\dot B^{-\beta(r)}_{D, r,2} (\Omega)} \lesssim |t|^{-\gamma(r)}  	\big\| f \big\|_{\dot B^{\beta(r)}_{D, r',2} (\Omega)} 
	\end{align*}
	where $\beta(r)=\gamma(r)=1- \frac2r$.
\end{proposition}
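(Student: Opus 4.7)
The plan is to reduce the Besov-space dispersive estimate to a single frequency-localized $L^{1}\to L^{\infty}$ bound, and then to establish the latter by a direct kernel analysis using the distorted Fourier transform of Section \ref{sect:DF transf}. Since $U(t)$ commutes with every $P^{\Omega}_{N}$, the square-function characterization of the Besov norm in Proposition \ref{prop:squ funt}, applied with weights $N^{\mp 2\beta(r)}$, reduces the statement to the block-wise inequality
\begin{equation*}
\bigl\|U(t)P^{\Omega}_{N}f\bigr\|_{L^{r}(\Omega)}\lesssim |t|^{-\gamma(r)}\,N^{2\beta(r)}\,\bigl\|P^{\Omega}_{N}f\bigr\|_{L^{r'}(\Omega)},\qquad N\in 2^{\Z}.
\end{equation*}
The case $r=2$ is the unitarity of $U(t)$ on $L^{2}(\Omega)$, immediate from Plancherel for $\mathcal{F}_{D}$. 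Riesz--Thorin then reduces the whole family to the single endpoint
\begin{equation*}
\bigl\|U(t)P^{\Omega}_{N}f\bigr\|_{L^{\infty}(\Omega)}\lesssim N^{2}|t|^{-1}\,\bigl\|P^{\Omega}_{N}f\bigr\|_{L^{1}(\Omega)},
\end{equation*}
which is equivalent to a uniform pointwise bound $\sup_{r,s\geq 1}|K^{N}_{t}(r,s)|\lesssim N^{2}|t|^{-1}$ on the radial integral kernel $K^{N}_{t}$ of $U(t)P^{\Omega}_{N}$ against the measure $s^{2}\,ds$.

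Next I would express this kernel via \eqref{def:DF}--\eqref{def:inverse DF} as
\begin{equation*}
K^{N}_{t}(r,s)=\frac{2}{\pi}\int_{0}^{\infty}e^{it\lambda}\,\frac{\sin\lambda(r-1)\,\sin\lambda(s-1)}{rs}\,\psi_{N}(\lambda)\,d\lambda,
\end{equation*}
and apply the product-to-sum identity $2\sin A\sin B=\cos(A-B)-\cos(A+B)$ together with Euler's formula to rewrite it as $\frac{1}{rs}$ times a linear combination of four one-dimensional oscillatory integrals $\int_{0}^{\infty}e^{i\lambda\tau}\psi_{N}(\lambda)\,d\lambda$ whose phases take the values
\begin{equation*}
\tau\in\bigl\{\,t\pm(r-s),\ t\pm(r+s-2)\,\bigr\}.
\end{equation*}
The phases $t\pm(r-s)$ reproduce the free 3D radial light cones, while $t\pm(r+s-2)$ encode the wave reflected off $\partial\Omega=\{|x|=1\}$. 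Since $\psi_{N}$ is smooth and supported in $[N/2,2N]$, repeated integration by parts gives
\begin{equation*}
\Bigl|\int_{0}^{\infty}e^{i\lambda\tau}\psi_{N}(\lambda)\,d\lambda\Bigr|\lesssim \frac{N}{(1+N|\tau|)^{k}},\qquad k\in\N,
\end{equation*}
and substituting this into each of the four pieces, with $k=0$ or $k\geq 1$ chosen according to whether $|t|$ lies close to $|r-s|$, close to $r+s-2$, or far from both, produces the pointwise kernel bound required by the endpoint.

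The hard part will be the control of the reflected phases $t\pm(r+s-2)$, which have no analogue in the free 3D kernel and correspond to the wave bouncing off the unit sphere. A key algebraic input is the elementary inequality $|r-s|\leq r+s-2$ valid for $r,s\geq 1$ (with equality only when $r=1$ or $s=1$), which prevents the direct and reflected phases from being simultaneously small away from $\partial\Omega$ and allows the integration-by-parts argument to extract the required $|t|^{-1}$ decay uniformly in $(r,s)$. Once the pointwise bound on $K^{N}_{t}$ is established, combining the resulting $L^{1}\to L^{\infty}$ estimate with the $L^{2}$ isometry by Riesz--Thorin and reassembling via the square function in Proposition \ref{prop:squ funt} yields the Besov-space statement of the proposition.
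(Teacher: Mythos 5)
Your reduction (Besov to blockwise, blockwise to the $L^1 \to L^\infty$ endpoint via $L^2$ unitarity and interpolation, then passage to a uniform pointwise bound on the radial kernel) matches the paper's skeleton exactly, and the product-to-sum expansion of the kernel is a legitimate route. However, the way you propose to close the endpoint estimate has two problems.

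First, the claim that the phases $t\pm(r+s-2)$ "have no analogue in the free 3D kernel" is a misconception. Writing the free radial kernel as $\frac{1}{rs}\int e^{it\lambda}\sin(\lambda r)\sin(\lambda s)\psi_N(\lambda)\,d\lambda$ and using the same product-to-sum identity yields precisely the phases $t\pm(r-s)$ and $t\pm(r+s)$; the exterior kernel is obtained from this by the shift $r\mapsto r-1$, $s\mapsto s-1$, modulated by the weight $\frac{(r-1)(s-1)}{rs}\leq 1$. That shift is exactly what the paper exploits: it identifies the exterior kernel with the well-understood free radial one at shifted radii and rescaled time, and then simply cites the known free dispersive bound. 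Recognizing this would have spared you the case analysis you sketch, and would replace your "key algebraic input" $|r-s|\leq r+s-2$ (true, but not what does the work) with the observation that actually matters: $r,s\geq 1$ makes the weight $\frac{(r-1)(s-1)}{rs}$ bounded by one.

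Second, the step where the $|t|^{-1}$ decay is extracted is underspecified. Integration by parts gives $\bigl|\int e^{i\lambda\tau}\psi_N\,d\lambda\bigr|\lesssim N(1+N|\tau|)^{-k}$, which on the critical set $|\tau|\lesssim N^{-1}$ is only $\lesssim N$, not $\lesssim N^2|t|^{-1}$. The missing ingredient is the interplay between the $1/(rs)$ prefactor and the geometry of the critical phases: whenever one of $t\pm(r-s)$ or $t\pm(r+s-2)$ vanishes with $r,s\geq1$, one necessarily has $\max(r,s)\gtrsim |t|$, hence $\frac{1}{rs}\lesssim |t|^{-1}$, and this is what produces the time decay. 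Your phrase "with $k=0$ or $k\geq1$ chosen according to whether $|t|$ lies close to $|r-s|$\dots" gestures at a case split but never states this geometric fact, which is the crux. The paper bypasses this issue entirely by quoting the free-space radial dispersive estimate (proved via Bessel-function expansions and van der Corput) after the coordinate shift, so the exterior problem inherits the bound with no new oscillatory-integral work.
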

\begin{remark}
In higher dimensions $d\geq 4$, the eigenfunction no longer have the simple form \eqref{quan:eign funt}, which will induce more complexity. We can refer to \cite{LiXZ:NLS} for  the dispersive estimates for the Schr\"odinger equation in dimensions $n=5, 7$,  and more details. 
\end{remark}
\begin{proof}Taking the distorted Fourier transform, we have
	\begin{align}\label{id:cons law}
	\big\| U(t) f \big\|_{L^2(\Omega) } = 	\big\|f \big\|_{L^2(\Omega) }. 
	\end{align}
Therefore, by the interpolation theorem, it suffices to show the  following uniform dispersive estimate
		\begin{align}\label{est:dis est:L1}
	\big\| U(t)  f \big\|_{\dot B^{-1}_{D, \infty,2} (\Omega)} \lesssim |t|^{-1}  	\big\| f \big\|_{\dot B^{1}_{D, 1,2} (\Omega)}. 
	\end{align}
	
Notice  that $	P^{\Omega}_{N}f (r)  = 	\tilde{P}^{\Omega}_{N} P^{\Omega}_{N} f (r) $ for $N\in 2^{\Z}$, $r=|x|\geq 1$, we have
	\begin{align}
	U(t) P^{\Omega}_{N}f (r) = & \frac{2}{\pi} \int^{\infty}_{0} \int^{\infty}_{1} e_{\lambda}(r)  e_{\lambda}(s) e^{it |\lambda|} \tilde{\psi}_{N}(\lambda) \big(P^{\Omega}_{N}f\big)(s) s^2  \; ds\, d\lambda \nonumber\\
	= &  \int^{\infty}_{1} K_N(t,r; s) \big(P^{\Omega}_{N}f\big)(s)  s^2 \; ds,  \label{id:Ut op kernel}
	\end{align}
where the integral kernel $K_N(t,r; s)$ is 
		\begin{align*}
	K_N(t,r; s) = & \frac{2}{\pi} \int^{+\infty}_{0}   e_{\lambda}(r)  e_{\lambda}(s) e^{i \lambda t} \tilde{\psi}_{N}(\lambda) \, d\lambda\\
	= &   \frac{2}{\pi} \int^{+\infty}_{0}  \frac{\sin \lambda(r-1)}{r}\cdot \frac{\sin \lambda(s-1)}{s}\cdot  e^{i\lambda t} \cdot \tilde{\psi}_N(\lambda)\; d\lambda\\
%	= & \frac{2N}{\pi} \int^{+\infty}_{0}  \frac{\sin N\lambda(r-1)}{r}\frac{\sin N\lambda(s-1)}{s} e^{itN|\lambda|} \tilde{\psi}_1(|\lambda|)\; d\lambda\\
	= & \frac{2N^3}{\pi}\cdot \frac{s-1}{s}\cdot \frac{r-1}{r} \cdot \int_{0}^{+\infty}  \frac{\sin N\lambda(s-1)}{N(s-1)}\cdot \frac{\sin N\lambda(r-1)}{N(r-1)} \cdot e^{iN \lambda t}\cdot \tilde{\psi}_1(\lambda)d\lambda\\
	\approx & \;  N^3\cdot \frac{s-1}{s} \cdot \frac{r-1}{r}\cdot \left(e^{iNt{\sqrt{-\Delta_{rad}}}}\tilde{P}_1\right)(N(r-1); N(s-1))
\end{align*}
and  the kernel $\left(e^{it{\sqrt{-\Delta_{rad}}}}\tilde{P}_1\right)(r;s)$   is defined by
\begin{align*}
\left(e^{it{\sqrt{-\Delta_{rad}}}}\tilde{P}_1\right)(r;s) =\; \text{Const}\cdot \int_{0}^{+\infty}\frac{\sin\lambda s}{s}\cdot \frac{\sin\lambda r}{r}\cdot e^{it\lambda}\cdot  \tilde{\psi}_1(\lambda)\,d\lambda, \quad r,\, s>0,
\end{align*}
which is related to  the usual radial half-wave propagator in the whole space $ \R^3$, and  follows from the properties of the Bessel function (see the proof  in Appendix \ref{app:A}). 
%In fact, we have for the radial function $f$ andf all $t\in \R$ that 
%\begin{equation*}
%        e^{it\sqrt{-\Delta_{rad}}}f=e^{it\sqrt{-\Delta_{\R^3}}}f
%    =  \mathcal{F}^{-1}(e^{it|\xi|}\mathcal{F}f(\xi)).
%\end{equation*}

 By the stationary phase estimate in \cite{Gr14:CFA, MuscaluS:book1} (we can also refer to \cite{GinV:NLW:St est, KeelTao:Endpoint, Tao:book}), we obtain
%\begin{align*}
%    \|e^{it\sqrt{-\Delta_{rad}}}\|_{L^1 \to L^\infty} \lesssim & \int_0^\infty e^{itr}(rs)^{-\frac{1}{2}}J_{\frac{1}{2}}(rs)\,dr
%    =  -\frac{1}{it}\int_0^\infty e^{itr}\frac{d}{dr}\left((rs)^{-\frac{1}{2}}J_{\frac{1}{2}}(rs)\right)\,dr
%    \lesssim  |t|^{-1}.
%\end{align*}
%\begin{align*}
%\|e^{it\sqrt{-\Delta_{rad}}}\|_{L^1(\R^3) \to L^\infty(\R^3)} \lesssim    |t|^{-1}, 
%\end{align*}
%which together with Young's inequality implies that 
%\[\|e^{it\sqrt{-\Delta_{rad}}}f\|_{L^\infty}\lesssim |t|^{-1}\|f\|_{L^1}.\]
% By the supported property of $\psi_0$, we have
	\begin{align}\label{est:Kernel}
	\sup_{r, s\geq 1} \big| K_N(t, r; s)\big|\lesssim   N^{3}\, |Nt|^{-1} 
	\lesssim  |t|^{-1} N^{2}. 
	\end{align}
By  \eqref{id:Ut op kernel}, \eqref{est:Kernel} and the Minkowski inequality, we obtain	
	\begin{align*}
	\big\| U(t)  P^{\Omega}_{N} f \big\|_{L^{\infty}(\Omega)}\lesssim N^3 \cdot  (N|t|)^{-1}  \cdot \big\| P^{\Omega}_{N} f \big\|_{L^1 (\Omega)} \lesssim |t|^{-1}  N^{2}	\big\| P^{\Omega}_{N} f \big\|_{L^1 (\Omega)}, 
	\end{align*}
	which implies the result and completes the proof.
	\end{proof}

By the $TT^{*}$ dual argument in \cite{GinV:NLW:St est},  and  the
 Christ-Kiselev Lemma in \cite{ChrstK:lem},   the dispersive estimate of the operator $e^{it\sqrt{-\Delta_{\Omega}}} $ in Proposition \ref{prop:Disp est} together with the conservation law \eqref{id:cons law} implies the Strichartz estimate of \eqref{eq:line wave}. We can also refer to \cite{KeelTao:Endpoint, Tao:book}. 
\begin{theorem}[Strichartz estimate with radial data]\label{thm:St est}
	Let $\rho_1, \rho_2, \mu\in \R$ and $2\leq q_1, q_2, r_1, r_2\leq \infty$ and let the following conditions be satisfied
	\begin{align*}
	0\,\leq \, \frac{1}{q_i} +\frac{1}{r_i} \leq \,\frac12, & \quad r_i \not = \infty,   \quad  i=1,\, 2, \\
	\rho_1 +3\left(\frac12 -\frac{1}{r_1}\right)-\frac{1}{q_1} = \mu, & \quad 
	\rho_2  +3\left(\frac12 -\frac{1}{r_2}\right)-\frac{1}{q_2} = 1-\mu. 
	\end{align*} 
Let $u_0, u_1$ and $F$ be radial in $x$, and $u: I\times \Omega\longrightarrow \R$ be  a solution to linear wave equation \eqref{eq:line wave} with $0\in I$. Then $u$ satisfies the estimates
	\begin{align*}
	\left\|u\right\|_{L^{q_1}_I\dot B^{\rho_1}_{D, r_1, 2}(\Omega)\cap C(I; \dot H^{\mu}_D(\Omega))} + 	\left\|\partial_t u\right\|&_{L^{q_1}_I\dot B^{\rho_1-1}_{D, r_1, 2}(\Omega) \cap C(I; \dot H^{\mu-1}_D(\Omega)) }   \\
	& \lesssim \; \|(u_0, u_1)\|_{\dot H^{\mu}_{D, rad}(\Omega)\times \dot H^{\mu-1}_{D,rad}(\Omega)} + \|F\|_{L^{q'_2}_I\dot B^{-\rho_2}_{D, r'_2, 2}(\Omega)}.
	\end{align*}
	\end{theorem}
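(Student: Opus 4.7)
The plan is to reduce everything to the half-wave propagator $U(t) = e^{it\sqrt{-\Delta_{\Omega}}}$ and then apply the standard $TT^*$ machinery of Ginibre--Velo, using Proposition \ref{prop:Disp est} as the dispersive input and Proposition \ref{prop:squ funt} to pass between $L^p$ and Besov norms. First, I would use Euler's formula to write $\cos(t\sqrt{-\Delta_\Omega}) = \tfrac12(U(t)+U(-t))$ and $\sin(t\sqrt{-\Delta_\Omega})/\sqrt{-\Delta_\Omega} = \tfrac{1}{2i\sqrt{-\Delta_\Omega}}(U(t)-U(-t))$, so that a single operator $\sqrt{-\Delta_\Omega}$ shifts regularity between $\dot H^{\mu-1}$ and $\dot H^{\mu}$ and all four terms of the Duhamel formula are controlled once $U(\pm t)$ is controlled. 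This is compatible with the scaling conditions $\rho_i + 3(\tfrac12 - \tfrac1{r_i}) - \tfrac1{q_i} = \mu$ or $1-\mu$ stated in the theorem.

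Next, for the homogeneous piece, I would prove $\|U(t)f\|_{L^{q_1}_I \dot B^{\rho_1}_{D,r_1,2}(\Omega)} \lesssim \|f\|_{\dot H^\mu_{D,rad}(\Omega)}$ by $TT^*$. Setting $Tf := U(t)(-\Delta_\Omega)^{-\mu/2}f$, one has $TT^* F(t) = \int_\R (-\Delta_\Omega)^{-\mu}\,U(t-s)F(s)\,ds$, and by Proposition \ref{prop:squ funt} (reducing the Besov norm to a square function of dyadic pieces) and Proposition \ref{prop:Disp est} applied at each dyadic frequency $N$, one gets the kernel decay
\begin{equation*}
\|U(t-s)P^{\Omega}_N g\|_{L^{r_1}(\Omega)} \lesssim |t-s|^{-(1-2/r_1)} N^{2(1-2/r_1)} \|P^{\Omega}_N g\|_{L^{r_1'}(\Omega)}.
\end{equation*}
Summing the Littlewood-Paley pieces with the scaling relation and applying the one-dimensional Hardy--Littlewood--Sobolev inequality in time closes the estimate, provided $\tfrac1{q_1} + \tfrac1{r_1} \leq \tfrac12$, which is exactly the admissibility condition in the hypothesis. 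Interpolation with the conservation $\|U(t)f\|_{L^2} = \|f\|_{L^2}$ (giving the trivial pair $q_1 = \infty$, $\rho_1 = 0$) fills out the remaining range, and the continuity $u \in C(I; \dot H^\mu)$ follows from the density of $C^{\infty}_c(\Omega)$ (Proposition \ref{prop:dense}) together with strong continuity of $U(t)$ on $L^2$.

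For the inhomogeneous piece $\int_0^t U(t-s)F(s)\,ds$, the same $TT^*$ bound automatically gives control of the \emph{non-retarded} integral $\int_\R U(t-s)F(s)\,ds$ mapping $L^{q_2'}_I \dot B^{-\rho_2}_{D,r_2',2}(\Omega) \to L^{q_1}_I \dot B^{\rho_1}_{D,r_1,2}(\Omega)$ whenever both $(q_i,r_i,\rho_i)$ satisfy the admissibility and scaling relations. The Christ--Kiselev lemma then transfers this to the retarded Duhamel integral under the condition $q_1 > q_2'$, i.e.\ $\tfrac1{q_1} + \tfrac1{q_2} < 1$, which is generically satisfied; the remaining diagonal cases are handled by the same $TT^*$ computation treated directly, using that the time integral is then absolutely convergent in $L^2$.

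The main technical point I expect to need care with is the interplay between the Besov square function decomposition and the dispersive estimate of Proposition \ref{prop:Disp est}: the dispersive estimate has matching Besov indices $r_1 = r_2$, so handling mixed pairs $(q_1,r_1) \neq (q_2,r_2)$ requires the bilinear form $\dual{TT^*F}{G}$ to be decomposed frequency by frequency and then reassembled via \emph{two} independent applications of Proposition \ref{prop:squ funt}, one on each side. Once the Besov square function is in place this is routine, but it is the step where the distorted-Fourier framework of Section \ref{sect:DF transf} does the real work, since the multiplier theorem (Theorem \ref{thm:multiplier}) is what makes the square function estimate and the Littlewood--Paley boundedness \eqref{est:LP bd1}--\eqref{est:LP bd2} hold in the radial setting.
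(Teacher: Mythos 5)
Your proposal is correct and follows essentially the same route as the paper: the paper's entire argument for Theorem~\ref{thm:St est} is a one-line appeal to the Ginibre--Velo dual/$TT^*$ machinery together with the Christ--Kiselev lemma, with Proposition~\ref{prop:Disp est} supplying the dispersive input, which is precisely what you spell out. Your further observation that the square-function estimate of Proposition~\ref{prop:squ funt} is what reconciles the Besov norms with the dyadic dispersive bound is the right technical point and is consistent with the Besov-space framework of Ginibre--Velo; note also that the hypothesis $r_i\neq\infty$ together with $\tfrac{1}{q_i}+\tfrac{1}{r_i}\leq\tfrac12$ already forces $q_1>2\geq q_2'$, so the ``diagonal'' case you flag never actually occurs and Christ--Kiselev always applies.
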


\begin{remark}
By the explicite distorted Fourier transform,  we give a short proof of the dispersive estimate and the global-in-time Strichartz estimate of the 3D Dirichlet boundary value problem of linear wave equation outside of the unit ball in the radial case. In general case, the related estimates are extremely complicated, please refer to \cite{Burq03: NLW CPDE, DuyYang24:LW Str,  HidanoMSSZ:AST est, Iv:NLS St est, SmithSogge00:NLW CPDE, SmithSW:AST est} and  \cite{IvLasLebP:LW:dis est, IvLebeauP:LW:dis est} for more details. 
	\end{remark}

\begin{proof}[Proof of Theorem \ref{thm:St est}]For the convenience to the reader,
we combine the  argument as that in the proof of Proposition $3.1$ in \cite{GinV:NLW:St est} and  the Christ-Kiselev Lemma in \cite{ChrstK:lem} to sketch the proof here.	By use of \eqref{lin wave}, \eqref{def:half wave} and \eqref{relat:wave and half wave}, it suffices to show that the half-wave operator satisfies 
\begin{align}\label{lin est:hom}
\left\| U(t)f(x)\right\|_{L^{q_1}_{\R}\dot B^{\rho_1}_{D, r_1, 2}(\Omega)} \leq \;  C \|f\|_{L^2\left(\Omega\right)}, 
\end{align}
and
\begin{align}
\left\| \int_{\R}U(t-s)F(s,x)\, ds \right\|_{L^{q_1}_{\R}\dot B^{\rho_1}_{D, r_1, 2}(\Omega)} \leq & \;  C \|F\|_{L^{q'_2}_{\R}\dot B^{-\rho_2}_{D, r'_2, 2}(\Omega)}, \label{lin est:dual}\\
\left\| \int^{t}_{0}U(t-s)F(s,x)\, ds \right\|_{L^{q_1}_{I}\dot B^{\rho_1}_{D, r_1, 2}(\Omega)} \leq & \;  C \|F\|_{L^{q'_2}_{I}\dot B^{-\rho_2}_{D, r'_2, 2}(\Omega)},  \label{lin est:inhom}
\end{align}
where $I=[0, T]\subset [0,+\infty)$ and  the functions $f$ and $F$ are radial in $x$,  under the conditions 
	\begin{align*}
0\,\leq \, \frac{1}{q_i} +\frac{1}{r_i} \leq \,\frac12, & \quad r_i \not = \infty,   \quad  i=1,\, 2,  \\
\rho_1 +3\left(\frac12 -\frac{1}{r_1}\right)-\frac{1}{q_1} = 0, & \quad 
\rho_2  +3\left(\frac12 -\frac{1}{r_2}\right)-\frac{1}{q_2} = 0.   
\end{align*} 

By the interpolation, it is equivalent to show  
\begin{align}
\left\| \int_{\R}U(t-s)F(s,x)\, ds \right\|_{L^{q_1}_{\R}\dot B^{\rho_1}_{D, r_1, 2}(\Omega)} \leq & \;  C \|F\|_{L^{q'_1}_{\R}\dot B^{-\rho_1}_{D, r'_1, 2}(\Omega)}, \label{lin est:dual2}\\
\left\| \int^{t}_{0}U(t-s)F(s,x)\, ds \right\|_{L^{q_1}_{I}\dot B^{\rho_1}_{D, r_1, 2}(\Omega)} \leq & \;  C \|F\|_{L^{q'_1}_{I}\dot B^{-\rho_1}_{D, r'_1, 2}(\Omega)},  \label{lin est:inhom2}
\end{align}
to prove \eqref{lin est:dual} and \eqref{lin est:inhom}. 

On the one hand, the estimate \eqref{lin est:hom} is equivalent to the estimate   \eqref{lin est:dual2} by the $ TT^{*}$ dual argument in \cite{GinV:NLW:St est}, and on the other hand, the retarded estimate \eqref{lin est:inhom2} can be deduced from the estimate \eqref{lin est:dual2} by the Christ-Kiselev Lemma in \cite{ChrstK:lem}. 	

At last, by  the Hardy-Littlewood-Sobolev inequality,  we can obtain the estimate \eqref{lin est:dual2}  from the uniform dispersive estimate in Proposition \ref{prop:Disp est}. This completes the proof. 
\end{proof}

 In fact, we can also follow the argument in \cite{Stb:LW St} and obtain improved endpoint Strichartz estimates in the radial case, which depends on the non-uniform dispersive estimate. As for the endpoint $L^2_tL^{\infty}_x$ estimate for the radial case in $\R^3$, we can also refer to \cite{KlainM:endp ST est, Tao:endpoint ST est}. 

\begin{theorem}[Endpoint Strichartz estimates with radial data]\label{thm:endp est} Let $u_0, u_1$ and $F$ be radial in $x$ variable, and $u: I\times \Omega\longrightarrow \R$ be  a solution to linear wave equation \eqref{eq:line wave} with $0\in I$. If $q>4$ and 
$
 s  = 1-3/q, 
$
	then we obtain
	\begin{align*} 
	\left\|u\right\|_{L^2_tL^q_x(I\times \Omega)}  \lesssim \|(u_0,u_1)\|_{\dot H^s_{D, rad}(\Omega)\times \dot H^{s-1}_{D, rad}(\Omega)} + \|F\|_{L^1_t \left(I;\,\dot H^{s-1}_{D, rad}(\Omega)\right)}. 
	\end{align*}
	
\end{theorem}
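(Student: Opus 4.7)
My plan is to reduce this exterior endpoint Strichartz estimate to its free-space 3D radial counterpart (Klainerman--Machedon, Sterbenz) by exploiting the kernel identity already derived inside the proof of Proposition \ref{prop:Disp est}. The first step is a standard Littlewood--Paley reduction: since $q>4>2$, combining the square function estimate of Proposition \ref{prop:squ funt} with Minkowski's inequality (applied to $L^{q/2}$) yields
\[
\|u\|_{L^2_t L^q_x(I\times\Omega)}^2 \;\lesssim\; \sum_{N\in 2^{\Z}} \|P^\Omega_N u\|_{L^2_t L^q_x}^2.
\]
It therefore suffices to prove the frequency-localized homogeneous bound $\|U(t) P^\Omega_N f\|_{L^2_t L^q_x(\R\times\Omega)} \lesssim N^{s}\|P^\Omega_N f\|_{L^2(\Omega)}$ for every dyadic $N$. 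The cosine and sine propagators reduce to $U(\pm t)$, and the Duhamel term is handled directly by Minkowski in time since $F\in L^1_t\dot H^{s-1}_{D,rad}$; no Christ--Kiselev argument is needed.

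The crux is the second step: a kernel-level transfer to the Euclidean setting. The proof of Proposition \ref{prop:Disp est} shows that, for $r,s\geq 1$,
\[
K_N(t,r;s) \;=\; \frac{N^3}{\pi}\cdot\frac{r-1}{r}\cdot\frac{s-1}{s}\cdot\mathcal{K}_1\!\bigl(Nt;\,N(r-1),\,N(s-1)\bigr),
\]
where $\mathcal{K}_1$ denotes the radial kernel of $e^{it\sqrt{-\Delta_{\R^3}}}\tilde\psi_0(\sqrt{-\Delta_{\R^3}})$ on $\R^3$. Setting $\rho=r-1$ and, for any radial $g$ on $\Omega$, defining its radial lift $\tilde g(\sigma):=g(\sigma+1)(\sigma+1)/\sigma$ on $\R^3$, the substitutions $s^2\,ds=(\sigma+1)^2\,d\sigma$ and $\tfrac{s-1}{s}\cdot s^2 = \sigma(\sigma+1)=\sigma^2\cdot\tfrac{\sigma+1}{\sigma}$ collapse the kernel identity into the operator identity
\[
\bigl[U(t)\tilde P^\Omega_N g\bigr](\rho+1) \;=\; \tfrac{\rho}{\rho+1}\,\bigl[P^{\R^3}_N e^{it\sqrt{-\Delta_{\R^3}}} \tilde g\bigr](\rho).
\]
The map $g\mapsto\tilde g$ is an $L^2$-isometry between $L^2(\Omega)$ and $L^2(\R^3)$, while post-multiplication by $\rho/(\rho+1)\leq 1$ gives $\|U(t)\tilde P^\Omega_N g\|_{L^q(\Omega)}\leq\|P^{\R^3}_N e^{it\sqrt{-\Delta_{\R^3}}}\tilde g\|_{L^q(\R^3)}$ for every $q\geq 2$, because $(\rho/(\rho+1))^{q-2}\leq 1$.

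Combined with the classical free-space 3D radial endpoint Strichartz estimate
\[
\|e^{it\sqrt{-\Delta_{\R^3}}} h\|_{L^2_t L^q_x(\R\times\R^3)} \;\lesssim\; \|h\|_{\dot H^{1-3/q}_{rad}(\R^3)}, \qquad q>4,
\]
which holds at $q=\infty$ by Klainerman--Machedon \cite{KlainM:endp ST est, Tao:endpoint ST est} and extends to $4<q<\infty$ by Sterbenz \cite{Stb:LW St} via a non-uniform dispersive estimate (exploiting the 1D reduction $v=ru$) plugged into the Keel--Tao machinery, the transfer identity above delivers the frequency-localized estimate of Step 1 applied to $\tilde g$; using $P^{\R^3}_N$ is $L^2$-bounded and $\|\tilde g\|_{L^2(\R^3)}=\|P^\Omega_N f\|_{L^2(\Omega)}$ converts this into the desired bound. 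Summation over $N$ via Proposition \ref{prop:squ funt} closes the proof.

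The main obstacle is to verify the operator identity in Step 2 and, in particular, the fortunate cancellation by which the factor $(s-1)/s$ vanishing on $\partial\Omega$ absorbs both the translation $r\mapsto r-1$ and the Jacobian mismatch between $(\sigma+1)^2\,d\sigma$ and $\sigma^2\,d\sigma$ near the origin; this is precisely what promotes $g\mapsto\tilde g$ to an isometry and makes the contraction bound valid for all $q\geq 2$. Once this cancellation is in hand, the remainder of the argument is a direct transplant of the classical Euclidean radial endpoint theory.
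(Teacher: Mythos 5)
Your proposal is correct and takes a genuinely different route from the paper. The paper proves the frequency-localized estimate directly: it expands $\mathcal{F}_D\bigl(P^\Omega_N f\bigr)(N\lambda)$ (which lives on the fixed interval $(\tfrac12,2)$) into a Fourier series $\sum_k c^N_k e^{i\pi k\lambda}$, obtains for each term a local dispersive estimate $|\psi^\pm_k(t,r)|\lesssim C_M\bigl(1+N|t+\tfrac{\pi k}{N}\pm(r-1)|\bigr)^{-M}$ by integration by parts, then carries out the $L^q_x$ integral explicitly using the $(1+r)^{2-q}$ weight, passes from $\ell^q$ to $\ell^2$ in the $k$-sum, and finally integrates in time using $q>4$ to recover $\sum_k|c^N_k|^2 = \|P^\Omega_N f\|^2_{L^2}$. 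Your proof instead short-circuits the hard analysis: the identity $K_N(t,r;s)=\tfrac{\rho}{\rho+1}\,\tfrac{\sigma}{\sigma+1}\,K^{\R^3}_N(t;\rho,\sigma)$ (with $\rho=r-1$, $\sigma=s-1$), together with the computation $\tfrac{\sigma}{\sigma+1}(\sigma+1)^2\,d\sigma=\sigma^2\cdot\tfrac{\sigma+1}{\sigma}\,d\sigma$, intertwines $U(t)\tilde P^\Omega_N$ with $\tilde P^{\R^3}_N e^{it\sqrt{-\Delta_{\R^3}}}$ through the isometric radial lift $g\mapsto\tilde g(\sigma)=\tfrac{\sigma+1}{\sigma}g(\sigma+1)$, followed by multiplication by $\tfrac{\rho}{\rho+1}\le 1$ — a contraction on $L^q$ for every $q\ge 2$ because $\bigl(\tfrac{\rho}{\rho+1}\bigr)^q(\rho+1)^2=\rho^2\bigl(\tfrac{\rho}{\rho+1}\bigr)^{q-2}\le\rho^2$. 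This reduces the exterior estimate to the known free-space radial endpoint Strichartz bound. What the paper's proof buys is self-containedness — it never needs the Euclidean radial endpoint estimate as a black box, and the Fourier-series device is itself an independently useful technique. What your argument buys is brevity and a structural insight: in the radial exterior-of-the-ball setting, the Dirichlet half-wave propagator is literally dominated, after an $L^2$-isometric conjugation, by the Euclidean one, so every radial $L^p_t L^q_x$ space-time bound on $\R^3$ transfers for free. Two small notational points to tighten: (i) the operator on the Euclidean side should carry the fattened projection $\tilde P^{\R^3}_N$ rather than $P^{\R^3}_N$ to match $\tilde\psi_N$ in the kernel; (ii) it is worth flagging explicitly that $\tilde g$ is generally \emph{not} frequency-localized even when $g=P^\Omega_N f$ is, so the $N^s$ gain comes from the outer $\tilde P^{\R^3}_N$ together with $\|\tilde P^{\R^3}_N\tilde g\|_{L^2}\le\|\tilde g\|_{L^2}=\|P^\Omega_N f\|_{L^2}$, exactly as you indicate.
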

%\begin{remark}
%	Euclidean radial setting, Sogge-Smith, 1/2 level symmetric exponent
%	\end{remark}

\begin{proof}By the energy estimate, it suffices to show the following homogeneous estimate for the half wave operator $U(t)$
	\begin{align*}
	\big\| U(t)f\big\|_{L^2_tL^q_x(I \times \Omega)} \lesssim \|f\|_{\dot H^s_{D, rad}(\Omega)}. 
	\end{align*}

By the distorted Fourier transform once again, we have
	\begin{align*}
U(t)f(r) = & \; \frac{2}{\pi}\int^{\infty}_{0} \frac{\sin(\lambda(r-1))}{r}\, e^{it|\lambda|}\left( \mathcal{F}_{D} f\right) (\lambda)\, d\lambda \\
	= &\;  \frac{1}{i\pi }\int^{\infty}_{0} \frac{e^{i\lambda(r-1)}-e^{-i\lambda(r-1)}}{r}\, e^{it|\lambda|}\left( \mathcal{F}_{D} f\right) (\lambda)\, d\lambda.
	\end{align*}
By  the Littlewood-Paley decomposition, we have
	\begin{align*}
		U(t)f(r)= &\sum\limits_{N} e^{it\sqrt{-\Delta_{\Omega}}}\tilde{P}^{\Omega}_{N}P^{\Omega}_{N}f(r) \\ 
	= & \sum\limits_{N} \frac{1}{i\pi r}\int^{\infty}_{0} \left( e^{i\lambda(r-1)}-e^{-i\lambda(r-1)}\right)\, e^{it|\lambda|} \tilde{\psi}_N(\lambda)\left( \mathcal{F}_{D} P^{\Omega}_{N}f\right) (\lambda)\, d\lambda\\
	= & \sum\limits_{N} \frac{N}{i\pi r}\int^{\infty}_{0} \left( e^{iN\lambda(r-1)}-e^{-iN\lambda(r-1)}\right)\, e^{itN|\lambda|} \tilde{\psi}_0(\lambda)\left( \mathcal{F}_{D} P^{\Omega}_{N}f\right) (N\lambda)\, d\lambda.
\end{align*}
Notice that  $\mathcal{F}_{D} (P^{\Omega}_{N}f)$ is supported on $(N/2, 2N)$, we can make a decomposition by Fourier series as the following,
	\begin{align*}
	 \left( \mathcal{F}_{D} P^{\Omega}_{N}f\right) (N\lambda)=\sum_{k\in \Z} c_k^N e^{ik\frac{2\pi}{2N}N\lambda},
	\end{align*}
then we have
			\begin{align*}
	e^{it\sqrt{-\Delta_{\Omega}}}\tilde{P}^{\Omega}_{N}P^{\Omega}_{N}f(r)  
	= &  \sum_{k\in \Z} \frac{\, Nc_k^N}{i\pi r} \int^{\infty}_{0} \left( e^{iN\lambda(r-1)}-e^{-iN\lambda(r-1)}\right)\, e^{itN|\lambda|} \tilde{\psi}_0(\lambda)  e^{i\pi k\lambda}\, d\lambda\\
	= &   \sum_{k\in \Z} \frac{\, Nc_k^N}{i\pi r} \Big(	\psi^{+}_{k}(t,r) -	\psi^{-}_{k}(t,r)\Big) ,
	\end{align*}
where the functions $\psi^{\pm}_{k}(t,r)$  are defined by
	\begin{align*}
	\psi^{\pm}_{k}(t,r) := \int^{\infty}_{0}  e^{\pm iN\lambda(r-1)} \, e^{itN \lambda} \tilde{\psi}_0(\lambda)  e^{i \pi k\lambda}\, d\lambda.
	\end{align*}
	
	On the one hand, by the support property of the cut-off function $\tilde{\psi}_0$,  we can obtain the boundness of the integral 
	\begin{equation*}
	\left|	\psi^{\pm}_{k}(t,r) \right| = \left| \int^{\infty}_{0}  e^{\pm iN\lambda(r-1)} \, e^{itN \lambda} \tilde{\psi}_0(\lambda)  e^{i \pi k\lambda}\, d\lambda \right| \lesssim C. 
	\end{equation*}
On the other hand, for any $M\geqslant 1$,  by integrating by parts $M$ times, we have 
	\begin{equation*}
	\left|	\psi^{\pm}_{k}(t,r) \right| \lesssim \frac{C_M}{ | Nt+\pi k\pm N (r-1)|^{M}}.
	\end{equation*}
Therefore,   for any $M\geqslant 1$,  we have the local dispersive estimate
\begin{equation*}
\left|	\psi^{\pm}_{k}(t,r) \right| \lesssim \frac{C_M}{\big( 1+ N|t+\frac{\pi k}{N}\pm (r-1)|\big)^{M}}.
\end{equation*}

Then  we can make the simple calculation and obtain
    \begin{align*}
	\left\|	e^{it\sqrt{-\Delta_{D}}}P^{\Omega}_{N}f \right\|^{q}_{L^q_x(\Omega)} \lesssim &  \int^{\infty}_{1} \left|  \sum_{k\in \Z} c_k^N\,  \psi^{\pm}_{k}(t,r) \right|^q\frac{1}{r^q} r^2 \; dr \\
	\lesssim & \int^{\infty}_{1} \left|  \sum_{k\in \Z} \frac{|c_k^N|}{\big( 1+ N|t+\frac{\pi k}{N}\pm (r-1)|\big)^{M}}  \right|^q  r^{2-q} \; dr \\
%	\lesssim  &   \sum_{k\in \Z}  \int^{\infty}_{0}   \frac{|c_k^N|^q}{\big( 1+ N|t+\frac{2\pi k}{3N}\pm r|\big)^{2}}    (1+r)^{2-q} \; dr\\
	\lesssim & \sum_{k\in \Z}  \int^{\infty}_{0}   \frac{|c_k^N|^q}{\big( 1+ N||t+\frac{\pi k}{N}|-r|\big)^{2}}    (1+r)^{2-q} \; dr,
\end{align*}
where we choose $M$ such that  $(M-2/q)\cdot q' >1$ and  use the Young inequality in the last inequality.	 We have for $s=1-\frac{3}{q}$ that
\begin{align*}
	\left\|	e^{it\sqrt{-\Delta_{D}}}P^{\Omega}_{N}f \right\|_{L^q_x(\Omega)}\lesssim & \left(\sum_{k\in \Z}  \int^{\infty}_{0}   \frac{|c_k^N|^q}{\big( 1+ N||t+\frac{\pi k}{N}|-r|\big)^{2}}    (1+r)^{2-q} \; dr\right)^{\frac{1}{q}}\\
	\lesssim & \left(\sum_{k\in \Z}  N^{q-3}\int^{\infty}_{0}   \frac{|c_k^N|^q}{\big( 1+ ||Nt+\pi k|-r|\big)^{2}} (1+r)^{2-q} \; dr\right)^{\frac{1}{q}}\\
	\lesssim & N^{s} \left(\sum_{k\in \Z} \frac{|c_k^N|^q}{\big(1+|Nt+\pi k|\big)^{q-2}}\right)^\frac{1}{q}\\
	\lesssim &  N^{s} \left(\sum_{k\in \Z} \frac{|c_k^N|^2}{\big(1+|Nt+\pi k|\big)^{2-\frac{4}{q}}}\right)^\frac{1}{2},
\end{align*}
where we use the embedding fact that $l^2\subset l^q$ in the last inequality. By Bernstein's inequality, we have for $q>4$ that
\begin{align*}
	\left\|	e^{it\sqrt{-\Delta_{D}}}P^{\Omega}_{N}f \right\|^2_{L^2_tL^q_x(I\times \Omega)}\lesssim  N^{2s}\int_I \sum_{k\in \Z} \frac{|c_k^N|^2}{\big(1+|Nt+\pi k|\big)^{2-\frac{4}{q}}}\,dt
	\lesssim  N^{2s}\sum_{k\in \Z} |c_k^N|^2
	\lesssim  \|P_N^\Omega f\|^2_{\dot{H}^s_{D}(\Omega)},
\end{align*}
which together with Proposition \ref{prop:squ funt} implies the result and completes the proof.
		\end{proof}

 \section{Global well-posedness: Proof of Theorem \ref{thm:nlu gwp}}\label{sect:gwp}
 
 In this part, we combine the Fourier truncation method  in \cite{Bourgain98:FTM, KenigPV00:NLW},  the global-in-time (endpoint) Strichartz esitmates in Theorem \ref{thm:St est} and Theorem \ref{thm:endp est} with the energy method to prove the low regularity global well-posedness of \eqref{eq:nlu} in Theorem \ref{thm:nlu gwp}, which is the similar as those in \cite{GallPlan03:NLW}. 

Let $\frac12 <  s\leq 1$ , $(u_0, u_1)\in \left( \dot H^{s}_{D, rad}(\Omega) \cap L^4(\Omega) \right)\times  \dot H^{s-1}_{D, rad}(\Omega)  $. 

\subsection{Global analysis for High frequency part}\label{subs:w H1/2 GWP}

Let us consider the following Dirichlet boundary value problem of  nonlinear wave equation outside of the unit ball with high-frequency-localized radial data.
 \begin{equation}\label{eq:nlw}
 \begin{cases}
 \partial^2_t w -\Delta w + w^3=0, & (t,x)\in\R \times\Omega, \\
 w(0,x)=w_0(x)=P^{\Omega}_{> 2^J}u_0(x),  & x\in \Omega, \\
 \partial_t w(0, x)= w_1(x)=P^{\Omega}_{> 2^J}u_1(x), & x\in \Omega, \\
 w(t,x)=0, & x\in \partial \Omega, 
 \end{cases}
 \end{equation}
 Notice that the distorted Fourier transform preserve the zero Dirichlet boundary value structure. 
 
 Let $\epsilon>0$ sufficiently small, and choose the dyadic number $J =J(\epsilon) \gg 1$ such that
 \begin{align}\label{data:w small}
 \left\| (w_0, w_1)\right\|_{\dot H^{s}_{D,rad}(\Omega) \times \dot H^{s-1}_{D, rad}(\Omega)}   =  \left\| \big (P^{\Omega}_{> 2^J}u_0, P^{\Omega}_{> 2^J}u_1 \big )\right\|_{\dot H^{s}_{D, rad}(\Omega) \times \dot H^{s-1}_{D, rad}(\Omega)}  \lesssim \epsilon . 
 \end{align}

 By the Strichartz estimates in Theorem \ref{thm:St est} and Theorem \ref{thm:endp est}, the standard well-posedness theory  in \cite{LindSog95:NLW, Sogge95:book} together with the regularity theory implies that 
 \begin{proposition}\label{prop:w:gwp} 
 	Let $0<\epsilon\ll 1$ and  $\frac12 <  s\leq 1$,
 there exists a large constant $J=J(\epsilon) >0$ such that if 
 \begin{equation}\label{est:small critnorm}
 2^{J(\frac12 -s)}   \lesssim \epsilon,   
 \end{equation}
 then	\eqref{eq:nlw} is global well-posedness in $ \dot H^{\frac12}_{D}(\Omega) \cap \dot H^{s}_{D}(\Omega)  $. Moreover, we have the following estimates.
 	\begin{align}
 	\|w\|_{L^4_{t,x}(\R\times \Omega)} + \|w\|_{L^{\infty}_tL^3_x(\R\times \Omega)} +
 	 	\|w\|_{L^2_tL^6_x(\R\times \Omega)}  &  \lesssim   2^{J(\frac12-s)}, \label{w:est26} 	\end{align}
 	 	and
 	 	\begin{align}
 	 		\|w\|_{L^{\infty}_t\left(\R; \, \dot H^{s}_{D, rad}(\Omega) \right)}  &  \lesssim \epsilon.\label{w:est s level}
          \end{align}
 	\end{proposition}
 
 \begin{proof}The result follows from the standard contraction mapping argument. More precisely, by the Strichartz estimates in Theorem \ref{thm:St est}, we will show the following map $w\mapsto 	\mathcal{T}(w) $ defined by
	\begin{align*}
 	\mathcal{T}(w) := \cos(t \sqrt{-\Delta_{\Omega}}) w_0 + \frac{\sin(t\sqrt{-\Delta_{\Omega}}) }{\sqrt{-\Delta_{\Omega}}} w_1  -\int^t_0  \frac{\sin((t-s)\sqrt{-\Delta_{\Omega}}) }{\sqrt{-\Delta_{\Omega}}} w^3(s)\; ds 
 	\end{align*}
 	is a contraction on the set $X\subset C\left(\R; \dot H^{\frac12}_{D, rad}(\Omega) \right)  $
 	\begin{align*}
X:= \Big\{ w\in C\left(\R; \dot H^{\frac12}_{D, rad}(\Omega) \right)&  \cap L^4(\R\times \Omega): 	\\
& \|w\|_{L^{\infty}_t\big( \R; \dot H^{\frac12}_{D, rad}(\Omega)\big)\cap L^4_{t,x}(\R\times \Omega)} \leq 2 \, C \left\| \big( w_0, w_1 \big )\right\|_{\dot H^{1/2}_{D, rad}(\Omega) \times \dot H^{-1/2}_{D, rad}(\Omega) } \Big\}	
 	\end{align*}
 	under the partial metric given by
$
 	d(w_1,w_2):=\|w_1-w_2\|_{ L^4_{t,x}(\R\times \Omega)}.
$
 Here $C$ denotes the constant from the Strichartz inequality. By \eqref{est:small critnorm} and the Bernstein estimate in Lemma \ref{lem:berns ests}, we have 
 \begin{align*}
  \left\| \big( w_0, w_1 \big )\right\|_{\dot H^{1/2}_{D, rad}(\Omega) \times \dot H^{-1/2}_{D, rad}(\Omega) }  \lesssim  2^{J(\frac12 -s)}   \lesssim \epsilon. 
 \end{align*}
 
 Using the Strichartz estimate and H\"older inequality, we have
 	\begin{align*}
 		 \|\mathcal{T}(w)\|_{L^{\infty}_t\left(\R; \dot H^{\frac12}_{D, rad}(\Omega) \right)\cap L^4_{t,x}(\R\times \Omega)}    
 		  \lesssim & \left\| \big( w_0, w_1 \big )\right\|_{\dot H^{1/2}_{D, rad}(\Omega) \times \dot H^{-1/2}_{D, rad}(\Omega)}  + \big\|w^3\big\|_{L^{4/3}_{t,x}(\R\times \Omega)} \\
 		 \lesssim & \left\| \big( w_0, w_1 \big )\right\|_{\dot H^{1/2}_{D, rad}(\Omega) \times \dot H^{-1/2}_{D, rad}(\Omega)}  + \big\|w\big\|^3_{L^{4}_{t,x}(\R\times \Omega)}\\
 		 \leq & \; 2 \, C \left\| \big( w_0, w_1 \big )\right\|_{\dot H^{1/2}_{D, rad}(\Omega) \times \dot H^{-1/2}_{D, rad}(\Omega) }. 
 	\end{align*}
 	Arguing as above, we obtain
 		\begin{align*}
 	\|\mathcal{T}(w_1)-\mathcal{T}(w_2)\|_{L^4_{t,x}(\R\times \Omega)}    \lesssim & \big\|w_1^3-w_2^3\big\|_{L^{4/3}_{t,x}(\R\times \Omega)} \\
 	\lesssim &  \big\|w_1-w_2\big\|_{L^{4}_{t,x}(\R\times \Omega)} \left(\big\|w_1\big\|^2_{L^{4}_{t,x}(\R\times \Omega)} + \big\|w_2\big\|^2_{L^{4}_{t,x}(\R\times \Omega)} \right).
 	\end{align*}
 	Thus, choosing $J=J(\epsilon)$ even larger (if necessary), we can guarantee that $\mathcal{T}$ maps the set $X$ back to itself and is a contraction on the set $X$. By the contraction mapping theorem, it follows that $\mathcal{T}$ has a fixed point $w$ in $X$. In addition, by the endpoint Strichartz estimate in Theorem \ref{thm:endp est}, the Sobolev embedding that $L^{3/2}(\Omega)\subset \dot H^{-1/2}_{D}(\Omega)$ and the product rule in Remark \ref{rem:Frac prod}, we have
 		\begin{align*}
 	\|w\|_{L^2_tL^6_{x}(\R\times \Omega)}   \lesssim & \left\| \big( w_0, w_1 \big )\right\|_{\dot H^{1/2}_{D, rad}(\Omega) \times \dot H^{-1/2}_{D, rad}(\Omega)}  + \big\|w^3\big\|_{L^1_t\left(\R;\, \dot H^{-1/2}_{D, rad}(\Omega)\right)} \\
 	\lesssim & \left\| \big( w_0, w_1 \big )\right\|_{\dot H^{1/2}_{D, rad}(\Omega) \times \dot H^{-1/2}_{D, rad}(\Omega)}  + \big\|w^3\big\|_{L^1_t\left(\R;\, L^{3/2}_x(\Omega)\right)} \\
 		\lesssim & \left\| \big( w_0, w_1 \big )\right\|_{\dot H^{1/2}_{D, rad}(\Omega) \times \dot H^{-1/2}_{D, rad}(\Omega)}  + \big\|w\big\|^2_{L^4_{t,x}(\R\times \Omega)}\big\|w\big\|_{L^2_t\left(\R; \, L^{6}_x(\Omega)\right)}\\
 		\lesssim & \left\| \big( w_0, w_1 \big )\right\|_{\dot H^{1/2}_{D, rad}(\Omega) \times \dot H^{-1/2}_{D, rad}(\Omega)}\\
 		\lesssim &  2^{J(\frac12-s)}, 
 	\end{align*}
 	and 
 		\begin{align*}
 	\|w\|_{L^{\infty}_t\left(\R; \dot H^{s}_{D, rad}(\Omega) \right)  \cap L^4_t\dot H^{s-1/2, 4}_{D, rad}(\Omega)} 
 	\lesssim & \, \left\| \big( w_0, w_1 \big )\right\|_{\dot H^{s}_{D, rad}(\Omega) \times \dot H^{s-1}_{D, rad}(\Omega)}   + \big\|w^3\big\|_{L^{4/3}_t\left(\R; \, \dot H^{s-1/2, 4/3}_{D, rad}(\Omega)\right)} \\
 	\lesssim& \, \left\| \big( w_0, w_1 \big )\right\|_{\dot H^{s}_{D, rad}(\Omega) \times \dot H^{s-1}_{D, rad}(\Omega)}   + \big\|w\big\|^2_{L^4_{t,x}(\R\times \Omega)}\big\|w\big\|_{L^4_t\left(\R;\, \dot H^{s-1/2, 4}_{D, rad}(\Omega)\right)}\\
 	\lesssim&  \,  \left\| \big( w_0, w_1 \big )\right\|_{\dot H^{s}_{D, rad}(\Omega) \times \dot H^{s-1}_{D, rad}(\Omega)} \\
 	 \lesssim &  \epsilon.
 	\end{align*}
 This completes the proof.
 	\end{proof}
 
% \begin{corollary}\label{cor:w est25}
% 	\begin{align}
% 				\int^{\infty}_{0}   \sum_{j}  2^{j/5}\left\| w_j\right\|^2_{L^{5}(\Omega)}  \; dt  \lesssim &\;  \epsilon \label{dyadw:est25}
% 				\end{align}
% 	\end{corollary}
 
% \begin{proposition}[Estimates for $w$]\label{prop:w:ests}
% 	Moreover, we have the following estimates.
% 	\begin{align}
%	\int^{\infty}_{0}   \sum_{j} 2^{-2j}\left\||x|^{1/2}\nabla_{t,x} w_j\right\|^2_{L^{\infty}(|x|>R/2)}  
%   \; dt \lesssim & \; \epsilon  \label{gradw:est2inf}\\
% 		\int^{\infty}_{0}     \sum_{j}  2^{-2j}\left\|\nabla_{t,x} w_j\right\|^2_{L^{6}(\Omega)}   \; dt  \lesssim &\;  \epsilon \label{gradw:est26}\
% 		\end{align}
% 	\end{proposition}
% 
% \begin{proof}
% 	
% 	\end{proof}

\subsection{Local-in-time energy analysis for low frequency part}\label{subs:v H1 LWP}
Let $w$ be the small solution of \eqref{eq:nlw} in Proposition \ref{prop:w:gwp},   we now consider the following difference equation in the energy space $\left( \dot H^{1}_{D, rad}(\Omega) \cap L^4(\Omega) \right)\times  L^2_{rad}(\Omega) $. 

 \begin{equation}\label{eq:nlv}
\begin{cases}
\partial^2_t v -\Delta v + v^3 =F(v, w), & (t,x)\in\R \times\Omega, \\
v(0,x)=v_0(x)=P^{\Omega}_{\leq 2^J }u_0(x),  & x\in \Omega, \\
\partial_t v(0, x)= v_1(x)=P^{\Omega}_{\leq 2^J }u_1(x), & x\in \Omega, \\
v(t,x)=0, & x\in \partial \Omega, 
\end{cases}
\end{equation}
where 
$
F(v, w)=- 3v^2 w - 3v w^2. 
$

 \begin{proposition}\label{prop:v:lwp}  Let $w$ be the solution in Proposition \ref{prop:w:gwp} and 
 	 $T=T\big(\|v_0\|_{\dot H^1_{D, rad}(\Omega)}, \|v_1\|_{L^2(\Omega)}\big) $ such that  
 	 \begin{align*}
 	  T \cdot   \left\| \big( v_0, v_1 \big )\right\|^2 _{\dot H^{1}_{D, rad}(\Omega)  \times L^2(\Omega)}  \lesssim 1,
 	 \end{align*}
 	 then there exists a unique solution $v \in C\left([0, T); \dot H^1_{D, rad}(\Omega)\right)$ to \eqref{eq:nlv}. Moreover, we have 
 	 	\begin{align*}
 	 		\|v\|_{L^{\infty}\big( [0, T); \dot H^{1}_{D, rad}(\Omega)\big) } \lesssim \left\| \big( v_0, v_1 \big )\right\|_{\dot H^{1}_{D, rad}(\Omega) \times L^2(\Omega) } .
 	 	\end{align*}
 	\end{proposition}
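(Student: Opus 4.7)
The proof will follow a standard contraction-mapping argument in the energy space applied to the Duhamel representation
\[
v(t) = \cos(t\sqrt{-\Delta_{\Omega}}) v_0 + \frac{\sin(t\sqrt{-\Delta_{\Omega}})}{\sqrt{-\Delta_{\Omega}}} v_1 - \int_0^t \frac{\sin((t-s)\sqrt{-\Delta_{\Omega}})}{\sqrt{-\Delta_{\Omega}}} \bigl(v^3 + 3 v^2 w + 3 v w^2\bigr)(s)\, ds,
\]
with the small solution $w$ of \eqref{eq:nlw} treated as a known forcing. The plan is to fix a time $T>0$ obeying the hypothesis $T\,\|(v_0,v_1)\|_{\dot H^1_{D,rad}\times L^2}^{2}\lesssim 1$ and to show that the map above is a contraction on
\[
Y := \Bigl\{\, v\in C\bigl([0,T);\dot H^1_{D,rad}(\Omega)\bigr)\cap C^1\bigl([0,T);L^2(\Omega)\bigr)\,:\, \|v\|_X \le 2C\,\|(v_0,v_1)\|_{\dot H^1_{D,rad}\times L^2}\,\Bigr\}
\]
equipped with the norm $\|v\|_X := \|v\|_{L^\infty_t \dot H^1_{D,rad}}+\|\partial_t v\|_{L^\infty_t L^2}$, where $C$ is the constant of the $(\infty,2)$-endpoint Strichartz/energy estimate from Theorem \ref{thm:St est} (equivalently, the standard energy identity for the linear wave equation with $L^1_t L^2_x$ forcing).

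The nonlinear bounds come from H\"older's inequality together with the Sobolev embedding $\dot H^1_{D}(\Omega)\hookrightarrow L^6(\Omega)$, which, for functions supported in $\Omega$, follows by extension by zero and the classical embedding on $\R^3$ (the case $(s,p)=(1,2)$ of Theorem \ref{thm:Sob eq}). Splitting $\tfrac{1}{2}=\tfrac{1}{6}+\tfrac{1}{6}+\tfrac{1}{6}$ yields
\begin{align*}
\|v^3\|_{L^1_tL^2_x([0,T)\times\Omega)} &\;\le\; T\,\|v\|_{L^\infty_tL^6_x}^3 \;\lesssim\; T\,\|v\|_X^3,\\
\|v^2 w\|_{L^1_tL^2_x} &\;\le\; \|v\|_{L^\infty_tL^6_x}^2\;T^{1/2}\,\|w\|_{L^2_tL^6_x} \;\lesssim\; T^{1/2}\epsilon\,\|v\|_X^2,\\
\|v w^2\|_{L^1_tL^2_x} &\;\le\; \|v\|_{L^\infty_tL^6_x}\,\|w\|_{L^2_tL^6_x}^2 \;\lesssim\; \epsilon^2\,\|v\|_X,
\end{align*}
where the smallness $\|w\|_{L^2_tL^6_x(\R\times\Omega)}\lesssim 2^{J(1/2-s)}\lesssim\epsilon$ is exactly the estimate \eqref{w:est26} from Proposition \ref{prop:w:gwp}. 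Feeding these into the energy estimate gives
\[
\|v\|_X \;\le\; C\,\|(v_0,v_1)\|_{\dot H^1_{D,rad}\times L^2} + C\Bigl( T\,\|v\|_X^3 + T^{1/2}\epsilon\,\|v\|_X^2 + \epsilon^2\,\|v\|_X\Bigr).
\]
For $v\in Y$, the hypothesis $T\|(v_0,v_1)\|^2\lesssim 1$ controls $T\|v\|_X^2$; choosing $\epsilon$ small absorbs the term $C\epsilon^2\|v\|_X$ into the left hand side, and the cross term $T^{1/2}\epsilon\,\|v\|_X^2$ is then dominated by Cauchy--Schwarz applied to the two preceding bounds, so the map sends $Y$ into itself. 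A parallel difference estimate based on the factorisations
\[
v_a^3-v_b^3=(v_a-v_b)(v_a^2+v_a v_b+v_b^2),\qquad v_a^2 w - v_b^2 w = (v_a-v_b)(v_a+v_b)\,w,
\]
yields contraction in the $X$-norm, and the Banach fixed-point theorem produces a unique $v\in Y$.

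The one genuine obstacle is the bookkeeping of the $v^2 w$ and $v w^2$ contributions, which must be absorbed without invoking any $\dot H^1$ regularity of $w$: one only has access to the $\dot H^s$-level control \eqref{w:est s level} and the Strichartz bounds \eqref{w:est26} of Proposition \ref{prop:w:gwp}. The resolution is to pair successive copies of the $L^6$ Sobolev norm of $v$ against the small Strichartz norm $\|w\|_{L^2_tL^6_x}\lesssim\epsilon$; this is exactly where the choice of $w$ as the high-frequency part of the initial data enters essentially. Once the fixed point is obtained, the bound $\|v\|_{L^\infty([0,T);\dot H^1_{D,rad})}\lesssim \|(v_0,v_1)\|_{\dot H^1_{D,rad}\times L^2}$ is immediate from the definition of $Y$.
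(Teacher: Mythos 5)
Your proposal is correct and follows essentially the same contraction-mapping scheme as the paper: Duhamel representation, energy/Strichartz estimate at the $(L^\infty_t \dot H^1_D, L^1_t L^2_x)$ level, Sobolev embedding $\dot H^1_D(\Omega)\hookrightarrow L^6(\Omega)$, and smallness of $\|w\|_{L^2_tL^6_x}$ from \eqref{w:est26} to absorb the linear-in-$v$ term, with the local time $T$ chosen so that $T\|(v_0,v_1)\|^2\lesssim 1$ kills the $T\|v\|_X^3$ term. Two minor remarks: your inclusion of $\|\partial_t v\|_{L^\infty_t L^2_x}$ in the working norm is harmless but not used by the paper, which works only with $\|v\|_{L^\infty_t\dot H^1_D}$; and you track the $v^2w$ cross term (with its $T^{1/2}\|w\|_{L^2_tL^6_x}$ factor and a Young-type absorption) more explicitly than the paper's displayed estimate, which silently subsumes it into the other two contributions.
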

 
 \begin{proof}  The result also follows from the standard contraction mapping argument. More precisely, using the Strichartz estimates from Theorem \ref{thm:St est}, we will show the following map $v\mapsto 	\mathcal{T}(v) $ defined by
 	\begin{align*}
 	\mathcal{T}(v) := \cos(t \sqrt{-\Delta_{\Omega}}) v_0 &+ \frac{\sin(t\sqrt{-\Delta_{\Omega}}) }{\sqrt{-\Delta_{\Omega}}} v_1 \\
 	& -\int^t_0  \frac{\sin((t-s)\sqrt{-\Delta_{\Omega}}) }{\sqrt{-\Delta_{\Omega}}} \left(v^3+3v^2w +3vw^2\right)(s)\; ds 
 	\end{align*}
 	is a contraction on the set $X \subset  C\left([0, T); \dot H^{1}_{D}(\Omega) \right)$:
 	\begin{align*}
 \left\{ v\in C\left([0, T); \dot H^{1}_{D, rad}(\Omega) \right): 	\|v\|_{L^{\infty}\big( [0, T); \dot H^{1}_{D, rad}(\Omega)\big) }\leq 2 \, C \left\| \big( v_0, v_1 \big )\right\|_{\dot H^{1}_{D, rad}(\Omega) \times L^2(\Omega) } \right\},
 	\end{align*}
 where $T$ is determined later and  the metric on $X$ is given by
 	\begin{align*}
 	d(u,v):=\|u-v\|_{ L^{\infty}\big( [0, T); \dot H^{1}_{D, rad}(\Omega)\big)}.
 	\end{align*} 
 	Here $C$ denotes the constant from the Strichartz inequality. 
 	
 		Using the Strichartz estimate and H\"older inequality,  we have
 	
 	\begin{align*}
 & \;	\|\mathcal{T}(v)\|_{L^{\infty}_t\left([0, T); \dot H^{1}_{D, rad}(\Omega) \right)}    \\
 	\lesssim & \left\| \big( v_0, v_1 \big )\right\|_{\dot H^{1}_{D, rad}(\Omega) \times L^2 (\Omega)}  + \big\|v^3+3v^2w +3vw^2\big\|_{L^{1}_tL^2_x([0, T)\times \Omega)} \\
 	\lesssim & \left\| \big( v_0, v_1 \big )\right\|_{\dot H^{1}_{D, rad}(\Omega) \times L^2(\Omega)}  + T\, \big\|v\big\|^3_{L^{\infty}_tL^6_x([0, T)\times \Omega)}    +  \big\|w\|^2_{L^2_{t}L^6_x(\R\times  \Omega)}  \big\|v\big\|_{L^{\infty}_tL^6_x([0, T)\times \Omega)} \\
 	\lesssim & \left\| \big( v_0, v_1 \big )\right\|_{\dot H^{1}_{D, rad}(\Omega) \times L^2(\Omega)}  + T \, \big\|v\big\|^3_{L^{\infty}_t\left([0, T); \dot H^1_{D}( \Omega)\right)}    +  \big\|w\|^2_{L^2_{t}L^6_x(\R\times \Omega)}  \big\|v\big\|_{L^{\infty}_t\left([0, T); \dot H^1_{D, rad}( \Omega)\right)}. 
 	\end{align*}
 	Arguing as above, we obtain
 	\begin{align*}	\|\mathcal{T}(v_1)-\mathcal{T}(v_2)\|&_{L^{\infty}_t\left([0, T); \dot H^{1}_{D, rad}(\Omega) \right)}   \\ \lesssim  &\;  \big\|v_1^3-v_2^3+3v_1^2w -3v_2^2w+3v_1w^2-3v_2w^2\big\|_{L^{1}_tL^2_x([0, T)\times \Omega)} \\
 	\lesssim & \;  T \big\|v_1-v_2\big\|_{L^{\infty}_t\left([0, T); \, L^6_x( \Omega)\right)}  \left(\big\|v_1\big\|^2_{L^{\infty}_t\left([0, T); \, L^6_x( \Omega)\right)} + \big\|v_2\big\|^2_{L^{\infty}_t\left([0, T); \, L^6_x( \Omega)\right)}\right) \\
 	& \;   +  \big\|w\|^2_{L^2_{t}L^6_x( \R\times \Omega)}  \big\|v_1-v_2\big\|_{L^{\infty}_t\left([0, T); \, L^6_x( \Omega)\right)} \\
 	\lesssim &  \;  T \big\|v_1-v_2\big\|_{L^{\infty}_t\dot H^1_{D, rad}( \Omega)}  \left(\big\|v_1\big\|^2_{L^{\infty}_t\dot H^1_{D, rad}( \Omega)} + \big\|v_2\big\|^2_{L^{\infty}_t\dot H^1_{D, rad}( \Omega)}\right)   \\
 	&\;  +  \big\|w\|^2_{L^2_{t}L^6_x( \R\times \Omega)}  \big\|v_1-v_2\big\|_{L^{\infty}_t\dot H^1_{D, rad}( \Omega)}. 
 	\end{align*}
 	Thus, by  Proposition \ref{prop:w:gwp} and choosing $T$ such that $$2\; T \cdot  \left(2 \; C \left\| \big( v_0, v_1 \big )\right\|_{\dot H^{1}_{D, rad}(\Omega)  \times L^2(\Omega)}   \right)^2 \leq \frac12, $$  we can guarantee that $\mathcal{T}$ maps the set $X$ back to itself and is a contraction on the set $X$. By the contraction mapping theorem, it follows that $\mathcal{T}$ has a fixed point $v$ in $X$. 
 	\end{proof}

 \subsection{Global-in-time energy analysis for low frequency part} \label{subs:v H1 est}
 In this part, we extend local solution $v$ of \eqref{eq:nlv} in Proposition \ref{prop:v:lwp} to global one in the energy space $\dot H^1_{D, rad}(\Omega) \times L^2_{rad}(\Omega)$.  By Proposition \ref{prop:v:lwp},   it suffices to control the growth of the energy of  the solution $v$ of \eqref{eq:nlv}, which is not conserved because of the  perturbation term $F(v, w)$. 
 
 Let us denote the energy of $v$ by
 \begin{align*}
 E(v)(t)=  \int_{\Omega} \frac12 \left(\partial_t v(t)\right)^2 + \frac12 \left|\nabla v(t)\right|^2  + \frac14 \left| v(t)\right|^4\; dx,
 \end{align*}
 and take 
$\displaystyle 
 E_T = \sup_{0\leq t<T} E(v)(t), 
$
 where $[0, T)$ is the maximal lifespan interval in Proposition \ref{prop:v:lwp}.

 We now turn to control the energy growth of $v$. 
 
\begin{proposition}\label{prop:v:energy}
Let $w$ be the global solution of \eqref{eq:nlw} in Proposition \ref{prop:w:gwp}, 	 $v$ be the local energy solution of \eqref{eq:nlv} in Proposition \ref{prop:v:lwp}, and $[0, T)$ be the maximal lifespan interval of $v$, then  for any $t\in [0, T)$, we have
	\begin{align*}
	E(v)(t)  \leq E_T \lesssim E(v)(0)  + E_T^{3/2} T^{1/2} \|w\|_{L^2_tL^6_x(\R\times \Omega)}+ E_T \|w\|^2_{L^2_tL^6_x(\R\times \Omega)}.
	\end{align*}
\end{proposition}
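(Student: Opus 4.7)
The plan is to derive the standard energy identity by pairing \eqref{eq:nlv} with $\partial_t v$, integrating over $\Omega$, and using the zero Dirichlet boundary condition to kill the boundary term arising from the integration by parts on $-\Delta v$. This yields
\begin{align*}
\frac{d}{dt} E(v)(t) \;=\; \int_{\Omega} \partial_t v \cdot F(v,w) \, dx \;=\; -3\int_{\Omega} \partial_t v\,\bigl(v^2 w + v w^2\bigr)\, dx.
\end{align*}
Integrating over $[0,t]$ and taking absolute values reduces the proposition to estimating the two space-time integrals
\begin{align*}
I_1 := \int_0^t\!\!\int_{\Omega} |\partial_t v|\, v^2\, |w|\, dx\, ds, \qquad I_2 := \int_0^t\!\!\int_{\Omega} |\partial_t v|\, |v|\, w^2\, dx\, ds.
\end{align*}

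For $I_1$, I would apply H\"older in $x$ with exponents $(2,6,6,6)$ (which sum to $1$) to get the pointwise-in-time bound $\|\partial_t v\|_{L^2_x} \|v\|_{L^6_x}^2 \|w\|_{L^6_x}$. The Sobolev embedding $\dot H^1_{D,rad}(\Omega) \hookrightarrow L^6(\Omega)$, which follows from Theorem \ref{thm:Sob eq} and the classical embedding on $\R^3$, combined with the definition of $E_T$, gives $\|v\|_{L^\infty_t L^6_x} \lesssim E_T^{1/2}$ and $\|\partial_t v\|_{L^\infty_t L^2_x} \lesssim E_T^{1/2}$. Applying Cauchy--Schwarz in time to the remaining factor produces $\int_0^t \|w(s)\|_{L^6_x}\, ds \leq T^{1/2}\|w\|_{L^2_t L^6_x(\R\times \Omega)}$, and hence $I_1 \lesssim E_T^{3/2}\, T^{1/2}\, \|w\|_{L^2_t L^6_x(\R\times \Omega)}$.

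For $I_2$ the same H\"older splitting yields the pointwise bound $\|\partial_t v\|_{L^2_x} \|v\|_{L^6_x} \|w\|_{L^6_x}^2$. This time no Cauchy--Schwarz is needed: the factor $\|w\|_{L^6_x}^2$ integrates directly in time as $\|w\|_{L^2_t L^6_x(\R\times \Omega)}^2$, giving $I_2 \lesssim E_T\, \|w\|_{L^2_t L^6_x(\R\times \Omega)}^2$. Summing the contributions of $E(v)(0)$, $I_1$ and $I_2$, and taking the supremum over $t \in [0,T)$, yields the stated inequality for $E_T$.

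The only real (and minor) obstacle is rigorously justifying the energy identity, since $v$ is only a strong $\bigl(\dot H^1_{D,rad}\cap L^4\bigr)\times L^2_{rad}$ solution constructed via a Strichartz contraction. I would handle this by a standard approximation: regularize the initial data $(v_0,v_1)$ and the source $w$ by smooth radial truncations, derive the identity for the resulting smooth solutions by differentiation under the integral sign, and then pass to the limit using the local stability estimates from Proposition \ref{prop:v:lwp} together with the $L^4_{t,x}\cap L^2_t L^6_x$ control on $w$ from Proposition \ref{prop:w:gwp}, which ensures that the cubic and perturbative nonlinearities converge in the appropriate sense. Once the identity is in hand, the rest is the H\"older bookkeeping sketched above.
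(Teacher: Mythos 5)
Your proof is correct and follows essentially the same route as the paper: differentiate the energy in time (the cubic term $v^3$ cancels by the product rule, leaving only the pairing of $\partial_t v$ with $F(v,w) = -3v^2 w - 3vw^2$), split into the two terms, apply H\"older with exponents $(2,6,6,6)$, use the Sobolev embedding $\dot H^1_{D}(\Omega)\hookrightarrow L^6(\Omega)$ and the definition of $E_T$, then apply Cauchy--Schwarz in time for the $v^2 w$ term (producing the $T^{1/2}\|w\|_{L^2_t L^6_x}$ factor) and integrate the $vw^2$ term directly (producing $\|w\|^2_{L^2_t L^6_x}$). The only material difference is your final paragraph on rigorously justifying the energy identity by smooth approximation; the paper simply asserts the identity by formal differentiation, so your remark is a welcome (if routine) addition rather than a divergence in method.
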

\begin{proof}
We now take the derivative in time and obtain that
\begin{align} \label{est:env dt}
\frac{d}{dt} E(v)(t) = \int_{\Omega} \partial_t v \big(-3v^2 w -3 vw^2\big) \; dx. 
\end{align}	
By the H\"older inequality, we obtain that 
\begin{align}
\left| \int^T_0\int_{\Omega} \partial_t v\, v^2 \, w \; dxdt \right|\lesssim & \int^T_0  \|\partial_t v\|_{L^2_x(\Omega)} \|v\|^2_{L^6_x(\Omega)} \|w\|_{L^6_x(\Omega)} \, dt \nonumber\\
 \lesssim &   \int^T_0 \|\partial_t v\|_{L^2_x(\Omega)} \|v\|^2_{\dot H^1_{D}(\Omega)} \|w\|_{L^6_x(\Omega)} \; dt 
\lesssim   E_T^{3/2} \, T^{1/2}\,  \|w\|_{L^2_tL^6_x(\R\times \Omega)}, \label{est:env dt1}
\end{align}
and 
\begin{align}
\left| \int^T_0\int_{\Omega} \partial_t v\, v \, w^2 \; dxdt \right|\lesssim & \int^T_0  \|\partial_t v\|_{L^2_x(\Omega)} \|v\|_{L^6_x(\Omega)} \|w\|^2_{L^6_x(\Omega)} \, dt \nonumber  \\ 
\lesssim &   \int^T_0 \|\partial_t v\|_{L^2_x(\Omega)} \|v\|_{\dot H^1_{D}(\Omega)} \|w\|^2_{L^6_x(\Omega)} \; dt 
\lesssim   E_T \,   \|w\|^2_{L^2_tL^6_x(\R\times \Omega)}. \label{est:env dt2}
\end{align}
Taking \eqref{est:env dt1} and \eqref{est:env dt2} into \eqref{est:env dt}, we obtain the result.
	\end{proof}

 By the fact that $(u_0, u_1)\in \left(\dot H^{s}_{D}(\Omega) \cap L^4(\Omega) \right)\times \dot H^{s-1}_{D}(\Omega) $, Theorem \ref{thm:Sob eq} and  Lemma \ref{lem:berns ests}, we have
 \begin{align*}
 E(v)(0) 
 =& \;  \int_{\Omega} \frac{1}{2} \left| \nabla  v(0)\right|^2 + \frac{1}{2} \left| \partial_t v(0)\right|^2  +\frac{1}{4} \left| v(0)\right|^4 \; dx
 \\
\approx & \; \frac{1}{2} \left\| \left(-\Delta_{\Omega}\right)^{1/2} P^{\Omega}_{\leq 2^J} u_0\right\|^2_{L^2(\Omega)} + \frac{1}{2} \left\|P^{\Omega}_{\leq 2^J}  u_1\right\|^2_{L^2(\Omega)} +\frac{1}{4} \left\| P^{\Omega}_{\leq 2^J}  u_0\right\|^4_{L^4(\Omega)}\\
 \lesssim &\;  2^{2J(1-s)}  \left( \left\| \left(-\Delta_{\Omega}\right)^{s/2}  u_0\right\|^2_{L^2(\Omega)} +  \left\|\left(-\Delta_{\Omega}\right)^{(s-1)/2} u_1\right\|^2_{L^2(\Omega)}  \right) +  \left\|  u_0\right\|^4_{L^4(\Omega)}\\
  \lesssim &\;  2^{2J(1-s)},
 \end{align*}
 then by Proposition \ref{prop:w:gwp} and Proposition \ref{prop:v:energy}, we have 
 	\begin{align*}
 E(v)(t)  \leq E_T \lesssim 2^{2J(1-s)}  + E_T^{3/2} \, T^{1/2} \, 2^{J(1/2-s)}+ E_T \, 2^{2J(1/2 -s)}.
 \end{align*}
For any arbitrarily large time $T$, we can choose $J$ such that
 \begin{align}\label{est:T}
 2^{3J(1-s)} \, T^{1/2} \, 2^{J(1/2-s)} \approx  2^{2J(1-s)}  \Longleftrightarrow T \approx 2^{2J(2s -3/2)}, 
 \end{align}
 we obtain 
 	\begin{align*}
E_T \lesssim 2^{2J(1-s)}  \approx T^{\frac{1-s}{2s-3/2}}.
\end{align*}
which gives  control of the energy growth of $v$ for arbitrarily large $T$ as long as $s>\frac{3}{4}$. 
 
 \subsection{Growth estimate of the solution $u$ of \eqref{eq:nlu} in $\dot H^s_{D, rad}(\Omega) $}\label{subs:u Hs est}
 
From Subsection \ref{subs:w H1/2 GWP} and Subsection \ref{subs:v H1 est}, we know that the solution $u$ of \eqref{eq:nlu} exists in $[0, T)$ for arbitrarily large $T$ in $\dot H^{\frac12}_{D, rad}(\Omega) \cap \dot H^s_{D, rad}(\Omega) + \dot H^{1}_{D, rad}(\Omega) $ with $s>\frac34$. 
 In this part, we show the estimate of the solution $u$ in $\dot H^s_{D, rad}(\Omega) $, and complete the proof of Theorem \ref{thm:nlu gwp}. 

 By Proposition \ref{prop:w:gwp}, it suffices to show the estimate of $v$ in $\dot H^s_{D, rad}(\Omega)$, in addition, the homogeneous part $$\cos(t\sqrt{-\Delta_\Omega})v_0 + \frac{\sin(t\sqrt{-\Delta_\Omega})}{\sqrt{-\Delta_\Omega}}v_1$$ of $v$ is bounded in $\dot H^s_{D}(\Omega)$ by the energy estimate.  By Proposition \ref{prop:v:energy} and the interpolation argument, it reduces to estimate the inhomogeneous part of $v$ in $L^2(\Omega)$.  By the distorted Fourier transform, Proposition \ref{prop:w:gwp} and Proposition \ref{prop:v:energy}, we have for any $0<t<T$
 \begin{align*}
& \left \|  \int^t_0  \frac{\sin\big((t-s)\sqrt{-\Delta_\Omega}\big)}{\sqrt{-\Delta_\Omega}} \big(v^3 + 3v^2w +3vw^2\big)\; ds  \right\|_{L^2(\Omega)} \\
\lesssim   & \int^t_0  \left \| \frac{\sin\big((t-s)\lambda\big)}{\lambda} \mathcal{F}_{D}\big(v^3 + 3v^2w +3vw^2\big)(\lambda) \right\|_{L^2_{\lambda}(\R^+)}  \; ds  \\
%\lesssim   & \int^t_0  (t-s) \left \|  \mathcal{F}_{D}\big(v^3 + 3v^2w +3vw^2\big)(\lambda) \right\|_{L^2_{\lambda}(\R^+)}  \; ds  \\
\lesssim   & \int^t_0  (t-s) \left \|   v^3 + 3v^2w +3vw^2  \right\|_{L^2_x(\Omega)}  \; ds  \\
%\lesssim   & \int^t_0  (t-s)  \left(  \|   v \|^3_{L^6(\Omega)}  +  \|   v \|^2_{L^6(\Omega)}   \|   w \|_{L^6(\Omega)}   +  \|   v \|_{L^6(\Omega)}   \|   v \|^2_{L^6(\Omega)}   \right) \; ds  \\
\lesssim   & \int^t_0  (t-s)  \left(  \|   v \|^3_{\dot H^1_{D, rad}(\Omega)}  +  \|   v \|^2_{\dot H^1_{D, rad}(\Omega)}   \|   w \|_{L^6_x(\Omega)}   +  \|   v \|_{\dot H^1_{D, rad}(\Omega)}   \|   w \|^2_{L^6_x(\Omega)}   \right) \; ds  \\
\lesssim   &\;  E^{3/2}_T \, \int^t_0  (t-s) \; ds +  E_T\,   \|   w \|_{L^2_tL^6_x(\R\times \Omega)}   \left( \int^t_0  (t-s)^2 \; ds \right)^{1/2} +  E^{1/2}_T \, T\,  \|   w \|^2_{L^2_tL^6_x(\R\times \Omega)}       \\
\lesssim   &\;  E^{3/2}_T \,T^2 +  E_T\,  T^{3/2}\,   \|   w \|_{L^2_tL^6_x(\R\times \Omega)}    +  E^{1/2}_T \, T\,  \|   w \|^2_{L^2_tL^6_x(\R\times \Omega)}       \\
\lesssim & \; T^{2+ \frac{3(1-s)}{4s-3}}, 
 \end{align*}
where $T$ is determined by \eqref{est:T}.
Therefore, by the interpolation between $\dot H^1_{D}(\Omega)$ and $L^2(\Omega)$, we obtain
\begin{align*}
 \left \|\int^t_0  \frac{\sin\big((t-s)\sqrt{-\Delta_\Omega}\big)}{\sqrt{-\Delta_\Omega}} \big(v^3 + 3v^2w +3vw^2\big)\; ds  \right\|_{\dot H^s_{D}(\Omega)} \lesssim  \left( T^{2+ \frac{3(1-s)}{4s-3}} \right)^{1-s} \, \left(T^{\frac{1-s}{4s-3}} \right)^{s}  \lesssim T^{\frac{3(1-s)(2s-1)}{4s-3}}. 
\end{align*}
This completes the proof of Theorem \ref{thm:nlu gwp}.

\appendix 
\section{the integral formula about the half-wave operator}\label{app:A}
	In this appendix, we show the integral formula about the half-wave operator in the radial case in the whole space $\R^3$.  We first recall  the usual Littlewood-Paley theory adapted to the Laplacian operator $-\Delta_{\R^3}$.  Let $\phi$,  $\phi_N$ and $\psi_N$ be defined  by \eqref{def:phi} and \eqref{def:phi scal}, and  $f\in C^{\infty}_{c}(\R^3)$ be a smooth radial function, we define the Littlewood-Paley projections:
	\begin{align*}
	P_{\leq N}f : = \phi_N(\sqrt{-\Delta_{\R^3}})f , \quad  P_{N}f:= \psi_N(\sqrt{-\Delta_{\R^3}})f, \quad  P_{>N}f:= I - P_{\leq N} f, 
	\end{align*}
	and
	\begin{align*}
	\tilde{P}_{N}f:= &\; \tilde{\psi}_N\left(\sqrt{-\Delta_{\R^3}}\right)f  \\
	= &\; \psi_{N-1}\left(\sqrt{-\Delta_{\R^3}}\right)f + \psi_{N}\left(\sqrt{-\Delta_{\R^3}}\right)f +\psi_{N+1}\left(\sqrt{-\Delta_{\R^3}}\right)f. 
	\end{align*}

	Let $r=|x|$, $s=|y|$,  $f\in C^{\infty}_{c}(\R^3)$ be a smooth radial function, then 
		\begin{align}\label{iden:half wave form}
		e^{it{\sqrt{-\Delta_{rad}}}}\tilde{P}_{1}f(x)=\text{Const}\cdot \int_{0}^{+\infty} \int_{0}^{+\infty}\frac{\sin\lambda s}{s}\cdot \frac{\sin\lambda r}{r}\cdot e^{it\cdot \lambda}\cdot \tilde{\psi}_1(\lambda) \,d\lambda \cdot  f(s) s^2 \; ds.  
		\end{align}
		In particular, we have 
			\begin{align*} 
		e^{it{\sqrt{-\Delta_{rad}}}}\tilde{P}_1f(r)= \int_{0}^{+\infty}   \left(	e^{it{\sqrt{-\Delta_{rad}}}}\tilde{P}_1\right)(r;s) \cdot  f(s) s^2 \; ds,   
		\end{align*}
		where the  kernel $	\left(e^{it{\sqrt{-\Delta_{rad}}}}\tilde{P}_1\right)(r;s) $ is defined by
		\begin{align}\label{def:half wave kernel}
		\left(e^{it{\sqrt{-\Delta_{rad}}}}\tilde{P}_1\right)(r;s) := \text{Const}\cdot  \int_{0}^{+\infty}\frac{\sin\lambda s}{s}\cdot \frac{\sin\lambda r}{r}\cdot e^{it\cdot \lambda} \cdot \tilde{\psi}_1(\lambda) \,d\lambda.  
		\end{align}
 
	\begin{proof}[Proof of $(A.1)$]
		By the sphere coordinate $x=r\cdot \theta$, $y=s\cdot \tilde{\theta}$,  $\xi=\lambda \cdot \omega$ and  the Fourier transform in $\R^3$, we have 
		\begin{align*}
		\mathcal{F}f(\xi)= & \; C\cdot \int^{\infty}_{0}\left(\int_{S^2} e^{i s \tilde{\theta}\cdot \lambda\omega} d\sigma(\tilde{\theta}) \right)   \cdot  f(s) \cdot  s^2 \; ds \\
		=& \;  C\cdot \int^{\infty}_{0}\frac{1}{|\lambda s|^{1/2}} J_{1/2}(\lambda s)  \cdot f(s) \cdot  s^2 \; ds,
		\end{align*}
		and 
		\begin{align*}
			e^{it{\sqrt{-\Delta_{rad}}}}\tilde{P}_1f(x)= & \; C\cdot \int^{\infty}_{0}\left(\int_{S^2} e^{i r \theta\cdot \lambda\omega} d\sigma(\omega) \right) \cdot  e^{i\lambda t} \cdot \tilde{\psi}_1(\lambda) \cdot \mathcal{F}f(\lambda) \cdot  \lambda^2 \; d\lambda \\
			=& \;  C\cdot \int^{\infty}_{0}\frac{1}{|\lambda r|^{1/2}} J_{1/2}(\lambda r) \cdot  e^{i\lambda t} \cdot \tilde{\psi}_1(\lambda)\cdot \mathcal{F}f(\lambda) \cdot  \lambda^2 \; d\lambda, 
		\end{align*}
where the constant number $C$ changes line by line,  and $J_{1/2}$ is the Bessel function of $1/2$ order. Since 
$
J_{1/2}(\lambda r)  = 
\sin(\lambda r)/|\lambda r|^{1/2}
$ (we can refer to \cite{Gr14:CFA}), we have
	\begin{align*}
\mathcal{F}f(\lambda)
=& \;  C\cdot \int^{\infty}_{0} \frac{\sin(\lambda s)}{\lambda }    \cdot  f(s) \cdot  s \; ds,
\end{align*}
and 
		\begin{align*}
e^{it{\sqrt{-\Delta_{rad}}}}\tilde{P}_1f(x)
=& \;  C\cdot \int^{\infty}_{0} \frac{\sin(\lambda r)}{r}  \cdot  e^{i\lambda t} \cdot \tilde{\psi}_1(\lambda) \cdot \mathcal{F}f(\lambda) \cdot  \lambda \; d\lambda\\
=& \;  C\cdot \int_{0}^{+\infty} \int_{0}^{+\infty}\frac{\sin\lambda s}{s}\cdot \frac{\sin\lambda r}{r}\cdot e^{it\cdot \lambda} \cdot \tilde{\psi}_1(\lambda) \,d\lambda \cdot  f(s) s^2 \; ds.  
\end{align*}
This completes the proof of \eqref{iden:half wave form}. 
		\end{proof}
	
 \noindent \subsection*{Acknowledgements.}
G. Xu  was supported by National Key Research and Development Program of China (No. 2020YFA0712900) and by NSFC (No. 112371240).  The authors would like to thank the referee for his/her valuable comments and suggestions, which helps to improve our paper.
% 
% 
% 
 
% \bibliographystyle{abbrv}
% \bibliography{biblio}

\def\cprime{$'$}

\end{document}